\numberwithin{equation}{section}
\theoremstyle{plain}
\newtheorem{thm}{Theorem}[section]
\newtheorem{coro}[thm]{Corollary}
\newtheorem{defi}[thm]{Definition}
\theoremstyle{definition}
\theoremstyle{remark}
\newcommand{\R}{\mathbb{R}}
\newcommand{\W}{\mathcal{W}}
\newcommand{\J}{\mathcal{J}}
\newcommand\pref[1]{(\ref{#1})}
\let \eps\varepsilon
\newcommand{\C}{\mathcal C}
\newcommand\clos{\overline}
\DeclareMathOperator{\argmin}{argmin}
\def\<#1,#2>{\left<#1,#2\right>}
\DeclareMathOperator{\Lip}{Lip}
\def\D{{\cal D}}
\def\PP{{\cal P}}
\def\P{{\cal P}}
\newcommand{\dd}{\;{\rm d}}
\newcommand{\id}{{\rm id}}
\renewcommand{\L}{{\mathcal L}}
\newcommand{\V}{{\mathcal V}}
\newcommand{\I}{{\mathcal I}}
\newcommand{\E}{{\mathcal E}}
\newcommand{\LL}{{\mathcal L}}
\title{Remarks on existence and uniqueness  of Cournot-Nash equilibria in the non-potential case}
\author {A. Blanchet\thanks{\scriptsize TSE (GREMAQ, Universit\'e de Toulouse), Manufacture des Tabacs
21 all\'ee de Brienne, 31000 Toulouse, FRANCE 
\texttt{Adrien.Blanchet@ut-capitole.fr}},  
G. Carlier\thanks{\scriptsize CEREMADE, UMR CNRS 7534, Universit\'e Paris Dauphine, Pl. de Lattre de Tassigny, 75775 Paris Cedex 16, FRANCE
\texttt{carlier@ceremade.dauphine.fr}}}
\begin{document}
\maketitle
\begin{abstract}
This article is devoted to various methods (optimal transport, fixed-point, ordinary differential equations) to obtain existence and/or uniqueness of Cournot-Nash equilibria for games with a continuum of players with both attractive and repulsive effects. We mainly address separable situations but for which the game does not have a potential, contrary to the variational framework of~\cite{abgc}. We also present several numerical simulations which illustrate the applicability of our approach to compute Cournot-Nash equilibria. 
 \end{abstract}
\textbf{Keywords:} Continuum of players, Cournot-Nash equilibria, optimal transport, best-reply iteration, congestion, non-symmetric interactions. 
\section{Introduction}
Equilibria in games with a continuum of players have received a lot of attention since the seminal work of Aumann~\cite{Aumann,Aumann2}, followed by Schmeidler~\cite{Schmeidler} and Mas-Colell~\cite{MasColell}. Following the presentation of Mas-Colell~\cite{MasColell}, we consider a type space $X$ endowed with a probability measure $\mu$. Each agent has to choose an action $y$ from some space $Y$, so as to minimise a cost that depends on her type and action but also on the distribution of strategies resulting from the other agents' behaviour. In this general setting, a Cournot-Nash equilibrium can be viewed as a joint probability measure $\gamma$ on the product $X\times Y$ of the type space and the action space, which gives full mass to pairs $(x,y)$ for which $y$ is cost-minimising for type $x$. What makes the problem involved is the dependence of the cost on the action distribution (that is the second marginal of $\gamma$). This explains that one cannot in general do much better than prove an existence result under regularity assumptions on the cost which are not necessarily realistic, as we shall discuss later.  
\smallskip

Another way to understand the difficulty of the problem lies in the nature of externalities. In realistic examples, there are attractive effects that result in agents choosing similar actions but also repulsive effects (congestion) that result in differentiation. Think of a population of young academics having to decide which research field to work in. Choosing a mainstream or fashionable field might be risky in terms of competition but a novel area is risky too. One expects equilibria to balance the attractive and repulsive effects in some sense, but their structure is not easy to guess when these two opposite effects are present. 
\smallskip

In~\cite{abgc}, we relate Cournot-Nash equilibria to optimal transport theory and identify a class of models which have the structure of a potential game. One then may obtain Cournot-Nash equilibria by the minimisation of a functional over the set of probabilities. One limitation of this approach is that it requires a symmetry in the interaction terms. The goal of the present article is to present various techniques to address the non potential case.  We shall indeed prove below, under a separability assumption, several existence, uniqueness, characterisation results and in some cases design simple numerical methods to compute equilibria.
\smallskip

The article is organised as follows. In Section~\ref{regcase}, we recall Mas-Colell's approach to prove existence of Cournot-Nash equilibria under a regularity assumption on the cost that actually rules out the case of congestion. In Section~\ref{sepcase}, we restrict ourselves to the separable case and recall the link with optimal transport, which we use in~\cite{abgc}. In Section~\ref{uniqmonot}, we prove a uniqueness result under a monotonicity assumption whose importance appeared in the Mean-Field-Games theory of Lasry and Lions \cite{mfg1, mfg2, mfg3}. In Section~\ref{bri}, we adopt a different, more direct, approach by best-reply iteration and identify conditions under which the corresponding operator is a contraction of the space of probability measures endowed with the Wasserstein metric. In Section~\ref{combivar}, after recalling the variational approach of~\cite{abgc}, we combine it with a fixed-point argument in $\LL^p$ to prove a rather general existence result under congestion effect and non-symmetric interactions. We end Section~\ref{combivar} by one-dimensional models for which we characterise equilibria by some ordinary differential equations and give some numerical simulations.\smallskip

{\bf{Notations}:} Throughout the article, the type space $X$ and the action space $Y$ will be assumed to be compact metric spaces. Given a Borel probability measures $m$ on $X$, which we shall simply denote $m\in \PP(X)$, and $T$ a Borel map: $X \to Y$,  the push-forward (or image measure) of $m$ through $T$, is the probability measure $T_\# m$ on $Y$ defined by $T_\# m(B)=m(T^{-1}(B))$ for every Borel subset $B$ of $Y$. The canonical projections on $X\times Y$ will be denoted $\pi_X$ and $\pi_Y$ respectively. For $m_1\in \PP(X)$ and $m_2\in \PP(Y)$, we shall denote by $\Pi(m_1, m_2)$ the set of measures $\gamma\in \PP(X\times Y)$ having $m_1$ and $m_2$ as marginals {\it i.e.} such that ${\pi_X}_\#\gamma=m_1$ and ${\pi_Y}_\#\gamma=m_2$. 
\section{The regular case: existence by fixed-point}\label{regcase}
In this section, we recall the existence of Cournot-Nash equilibria in a regular setting in which one can easily apply a fixed-point argument. What follows is essentially due to Mas-Colell~\cite{MasColell}. We give a short proof for the sake of completeness. We consider that the cost for an agent of type $x$ to choose action $y$ when the distribution of the agents' action is $\nu$ is denoted $C(x,y, [\nu])$. Throughout this section, we suppose that, for every $\nu\in \P(Y)$, $C(.,., [\nu])$ is continuous on $X\times Y$ and that 
\begin{equation}\label{REG}
\nu \mapsto C(.,., [\nu]) \mbox{ is a continuous map from $(\P(Y),\textrm{w}\!-\!*)$ to $(\C(X\times Y),\|\cdot\|_\infty)$}
\end{equation}
where $\textrm{w}\!-\!*$ stands for the weak-* topology on $\P(Y)$. In this setting, Cournot-Nash equilibria are naturally defined as:
\begin{defi}[Cournot-Nash equilibrium]\label{defieqregular}
A \emph{Cournot-Nash equilibrium} consists in a joint probability measure $\gamma\in \P(X\times Y)$ whose first marginal is the fixed measure $\mu\in \P(X)$ and such that, denoting by $\nu$ its second marginal, we have
\begin{equation}\label{defcne}
\gamma \left( \left\{(x,y)\in X\times Y \; : \; C(x,y, [\nu])=\min_{z\in Y} C(x,z,[\nu])\right\} \right)=1.
\end{equation}
\end{defi}
\begin{thm}[Existence of Cournot-Nash equilibrium: the regular case]\label{existencereg}
If \pref{REG} holds then there exists at least one Cournot-Nash equilibrium. 
\end{thm}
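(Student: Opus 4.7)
The plan is to apply the Kakutani--Fan--Glicksberg fixed-point theorem to a best-reply multivalued map. Let
\[
K:=\{\gamma\in\P(X\times Y)\,:\,{\pi_X}_\#\gamma=\mu\},
\]
which, since $X$ and $Y$ are compact metric, is a convex, weak-$*$ compact, metrizable subset of $\P(X\times Y)$. For $\gamma\in K$ set $\nu:={\pi_Y}_\#\gamma\in\P(Y)$ and define the excess function
\[
e_\nu(x,y):=C(x,y,[\nu])-\min_{z\in Y}C(x,z,[\nu]).
\]
By assumption, $C(\cdot,\cdot,[\nu])$ is continuous on the compact set $X\times Y$, so by Berge's maximum theorem $x\mapsto \min_{z\in Y}C(x,z,[\nu])$ is continuous, hence $e_\nu\in\C(X\times Y)$ and $e_\nu\geq 0$. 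I then define
\[
F(\gamma):=\Bigl\{\tilde\gamma\in K\,:\,\int_{X\times Y} e_\nu\,d\tilde\gamma=0\Bigr\}
=\Bigl\{\tilde\gamma\in K\,:\,\tilde\gamma\bigl(\{e_\nu=0\}\bigr)=1\Bigr\}.
\]
A fixed point $\gamma\in F(\gamma)$ is by construction a Cournot--Nash equilibrium in the sense of Definition~\ref{defieqregular}.

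The first step is to check that $F(\gamma)$ is nonempty and convex. Convexity is immediate since $F(\gamma)$ is the intersection of $K$ with the affine subspace $\{\int e_\nu d\tilde\gamma=0\}$. For nonemptiness, I apply a measurable selection theorem (for instance Kuratowski--Ryll-Nardzewski) to the correspondence $x\mapsto \argmin_{z\in Y}C(x,z,[\nu])$, which is nonempty compact-valued and, by continuity of $C(\cdot,\cdot,[\nu])$, measurable. A Borel selector $T:X\to Y$ produces $\tilde\gamma:=(\id,T)_\#\mu\in F(\gamma)$.

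The second and main step is to verify that $F$ has closed graph, which is where assumption~(\ref{REG}) is used crucially. Take $\gamma_n\to \gamma$ and $\tilde\gamma_n\in F(\gamma_n)$ with $\tilde\gamma_n\to \tilde\gamma$ weak-$*$ in $K$. Then $\nu_n:={\pi_Y}_\#\gamma_n \weakstarto \nu:={\pi_Y}_\#\gamma$ since the push-forward is weak-$*$ continuous. By~(\ref{REG}), $C(\cdot,\cdot,[\nu_n])\to C(\cdot,\cdot,[\nu])$ uniformly on $X\times Y$, which also implies $\min_z C(\cdot,z,[\nu_n])\to \min_z C(\cdot,z,[\nu])$ uniformly on $X$. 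Hence $e_{\nu_n}\to e_\nu$ uniformly. Combined with $\tilde\gamma_n\weakstarto \tilde\gamma$, this yields
\[
0=\int e_{\nu_n}\,d\tilde\gamma_n \;\longrightarrow\; \int e_\nu\,d\tilde\gamma,
\]
and since $e_\nu\geq 0$ is continuous, $\tilde\gamma$ is supported in $\{e_\nu=0\}$, i.e. $\tilde\gamma\in F(\gamma)$.

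Once closed graph, nonemptiness, and convexity are established on the convex compact metrizable set $K$, the Kakutani--Fan--Glicksberg theorem delivers a fixed point $\gamma\in F(\gamma)$, which is the desired equilibrium. The main technical obstacle is the closed graph property: without the uniform continuity built into~(\ref{REG}), only a weak-$*$ convergence of $C(\cdot,\cdot,[\nu_n])$ would be available, and one could not pass to the limit in $\int e_{\nu_n}d\tilde\gamma_n$, which is precisely why this fixed-point strategy cannot accommodate congestion-type singular costs.
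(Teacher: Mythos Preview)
Your proof is correct and follows essentially the same approach as the paper's: the same convex compact set $K$, the same best-reply correspondence $F$ expressed via the integral condition $\int e_\nu\,d\tilde\gamma=0$ (the paper writes $e_\nu(x,y)=C(x,y,[\nu])-\varphi_\nu(x)$ with $\varphi_\nu(x)=\min_z C(x,z,[\nu])$), the same closed-graph argument based on the uniform convergence in~\pref{REG}, and the same conclusion via a Kakutani--Ky~Fan type fixed-point theorem. Your version is slightly more explicit in justifying nonemptiness of $F(\gamma)$ through a measurable selection and in invoking Berge's theorem for the continuity of $\varphi_\nu$, points the paper leaves implicit.
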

\begin{proof}
Let 
\[
K:=\{\gamma\in \P(X\times Y) \; : \; {\pi_X}_\# \gamma=\mu\}\;.
\]
Obviously, $K$ is a convex and weakly-$*$ compact subset of $\P(X\times Y)$. Define for every $\gamma=\mu\otimes \gamma^x\in K$,
\[
F(\gamma):=\{\mu \otimes \eta^x\,:\,   \eta^x \in \P(  {\mathcal {Y}}_\gamma(x) )   \}
\]
where ${\mathcal Y}_\gamma(x)$ denotes the closed set
\begin{equation*}
  {\mathcal Y}_\gamma(x):=\argmin_{y\in Y} C(x,y,[\nu]) \quad \mbox{with} \quad  \nu:= {\pi_Y}_\# \gamma.
\end{equation*}
Note that, for $\gamma$ in $K$, setting  $\nu:= {\pi_Y}_\# \gamma$, and the continuous function 
\[
\varphi_\nu(x):=\min_{z\in Y} C(x,z,[\nu])
\]
then $F(\gamma)$ can also be expressed as 
\[
F(\gamma)=\left\{ \theta \in K  \, : \, \int_{X\times Y} \left\{C(x,y,[\nu])-\varphi_\nu(x)\right\} \dd\theta(x,y)=0\right\}\;.
\]
Hence,  $F$ is clearly a weak-$*$ closed and convex valued set-valued map $K\rightrightarrows K$. 

Let us now prove that $F$ has a weak-$*$ closed graph. Since the weak-$*$ topology is metrisable on $\P(X\times Y)$, it is enough to deal with a sequence $(\gamma_n, \theta_n)_n$ such that $\gamma_n \in K$, $\theta_n \in F(\gamma_n)$, $(\gamma_n)_n$ weakly-$*$ converges to some $\gamma$ and $(\theta_n)_n$ weakly-$*$ converges to some $\theta$ in $K$. Setting  $\nu:= {\pi_Y}_\# \gamma$ and  $\nu_n:= {\pi_Y}_\# \gamma_n$, $(\nu_n)_n$ weakly-$*$ converges to  $\nu$. By~\pref{REG}, $(C(.,., [\nu_n]))_n$ uniformly converges to $C(.,.,. [\nu])$ and $(\varphi_{\nu_n})_n$ uniformly converges to $\varphi_\nu$. We can therefore pass to the limit in 
\[
\int_{X\times Y} \left\{C(x,y,[\nu_n])-\varphi_{\nu_n}(x)\right\} \dd\theta_n(x,y)=0
\]
to deduce that $\theta\in F(\gamma)$. It thus follows from Ky Fan's theorem that $F$ admits a fixed-point~$\gamma$. It is then easy to see that $\gamma$ is an equilibrium with $\nu:= {\pi_Y}_\# \gamma$. 
\end{proof}

The previous result is not fully satisfactory. First, the regularity assumption~\pref{REG} is very demanding since it rules out purely local effects, congestion for instance. There are some extensions to a less regular setting, see e.g.~\cite{Kahn}, but to the best of our knowledge all these extensions require some form of lower semi-continuity so that none of them enables one to cope with a local dependence in the cost. Another drawback of an abstract proof relying on a fixed-point theorem is that it is non-constructive and does not provide a characterisation of the equilibria. 
\section{The separable case: connexion with optimal transport}\label{sepcase}
We want to consider costs with a possible local dependence, that is a dependence in $\nu(y)$. In such a case $\nu$ has to be absolutely continuous with respect to some fixed reference measure $m_0$ on the action space $Y$ and $\nu(y)$ has to be understood as the value of the Radon-Nikodym derivative of $\nu$ with respect to $m_0$ at $y$. This is motivated by congestion {\it i.e.} the fact that  more frequently played strategies may be more costly. A natural way to take the congestion effect into account is to consider a term of the form $f(y, \nu(y))$ where $f(y,.)$ is increasing. As soon as one incorporates local congestion effects, Assumption~\pref{REG} is violated and to keep the problem still reasonably tractable, we shall now restrict ourselves to the separable case:
\begin{equation}\label{sep}
C(x,y, [\nu])=c(x,y)+\V[\nu](y)
\end{equation}
where $c\in \C(X\times Y)$ is a transport cost depending only on the agent's type and her strategy, whereas the function $\V[\nu]$ 
captures an additional  cost created by the whole population of players. The typical case we have in mind is 
\begin{equation}\label{typical}
\V[\nu](y):=f(y, \nu(y))+ \I[\nu](y)
\end{equation}
where $f$ is non-decreasing in its second argument and $\I[\nu]$ is regular in the sense that $\I[\nu] \in \C(Y)$ for every $\nu\in \P(Y)$ with
\begin{equation}\label{REG20}
\nu \mapsto \I[\nu] \mbox{ is a continuous map from $(\P(Y),\textrm{w}\!-\!*)$ to $(\C(Y),\|\cdot\|_\infty)$}.
\end{equation}
Typical regular costs are those given by averages {\it i.e.}  $\I[\nu] (y)=\int_Y \phi(y,z) \dd\nu(z)$ where $\phi$ is continuous. Of course, if the congestion cost $f$ is zero, we are in the regular case in the sense of~\pref{REG20}. Taking the strategy distribution $\nu$ as given, an agent of type $x$ therefore aims to minimise in $y$ the cost $y \mapsto c(x,y)+\V[\nu](y)$. Since the latter need not be a continuous or even lower semi-continuous, the definition of an equilibrium has to be modified as follows:
\begin{itemize}
\item when $\V[\nu]$ is regular let us set $\D:=\P(Y)$,
\item 
when $\V[\nu]$ is of the form~\pref{typical}, we define the domain:  
\begin{equation}\label{domainD}
\D:=\left\{\nu \in  \P(Y)\cap \L^1(m_0)\; : \; \int_Y   f(y, \nu(y)) \dd m_0(y) <+\infty\right\}.
\end{equation}
\end{itemize}

Note that when $f$ satisfies the power growth condition:
\begin{equation}\label{growth} 
\frac{1}{C}  (t^{\alpha}-1)   \le  f(y, t) \le C (t^{\alpha}+1)
\end{equation}
for some $\alpha >0$ and $C>0$ and every $(y,t)$ then $\D=\P(Y)\cap \L^p(m_0)$ for $p=1+\alpha$.

As before a Cournot-Nash is a joint type-strategy probability measure $\gamma$ that is consistent with the cost minimising behaviour of agents, in the setting of this section, this leads to the definition:
\begin{defi}[Cournot-Nash equilibrium: non-regular case]\label{defieqlocal}
A probability $\gamma\in \P(X\times Y)$ is a \emph{Cournot-Nash equilibrium} if its first marginal is $\mu$, its second marginal $\nu$ belongs to $\D$ and there exists $\varphi\in \C(X)$ such that 
\begin{equation}\label{equipp}
\left\{
  \begin{array}{ll}
    c(x,y)+\V[\nu](y)\geq \varphi(x) \quad\mbox{$\forall x\in X$ and $m_0$-a.e. $y \in Y$}\vspace{.3cm}\\
    c(x,y)+\V[\nu](y)= \varphi(x) \quad\mbox{ for $\gamma$-a.e. $(x,y) \in X \times Y$}
\;.
  \end{array}
\right.
\end{equation}
A Cournot-Nash equilibrium $\gamma$ is called \emph{pure} if it  is  of the form $\gamma=({\rm{id}} ,T)_{\#}\mu$ for some Borel map $T$ : $X\to Y$, that is agents with the same type use the same strategy. 
\end{defi}
In the separable case, as noted in \cite[Lemma~2.2]{abgc}, Cournot-Nash equilibria are very much related to optimal transport. More precisely, for $\nu\in \PP(Y)$, let $\W_c(\mu, \nu)$ be the least cost of transporting $\mu$ to $\nu$ for the cost $c$ {\it i.e.} the value of the Monge-Kantorovich optimal transport problem:
\[
\W_c(\mu, \nu):=\inf_{\gamma\in \Pi(\mu, \nu)}  \int_{X\times Y} c(x,y) \dd\gamma(x,y)\;.
\]
Let us denote by $\Pi_o(\mu,\nu)$ the set of optimal transport plans\footnote{Since the admissible set is convex and weakly-$*$ compact, it is obvious that the Monge-Kantorovich optimal transport problem admits solutions. For a detailed account of optimal transport theory, we refer to Villani's textbooks~\cite{villani, villani2}}  {\it i.e.}
\[
\Pi_o(\mu,\nu):=\left\{\gamma\in \Pi(\mu, \nu) \; : \; \int_{X\times Y} c(x,y) \dd\gamma(x,y)=\W_c(\mu, \nu)\right\}\;.
\]
The link between Cournot-Nash equilibria and optimal transport is based on the following straightforward observation: if $\gamma$ is a Cournot-Nash equilibrium and $\nu$ denotes its second marginal then $\gamma\in \Pi_o(\mu, \nu)$. Indeed, if $\varphi\in \C(X)$ is such that~\pref{equipp} holds and if $\eta\in \Pi(\mu, \nu)$ then we have 
\begin{align*}
 \int_{X\times Y} c(x,y) \dd\eta(x,y)&\geq  \int_{X\times Y} \{\varphi(x)-\V[\nu](y)\} \dd\eta(x,y)\\
 &=\int_X \varphi(x) \dd\mu(x)-\int_Y \V[\nu](y) \dd\nu(y)
 = \int_{X\times Y} c(x,y) \dd\gamma(x,y)
 \end{align*}
so that $\gamma\in \Pi_o(\mu, \nu)$.

The above argument also proves that $\varphi$ solves the dual of the Monge-Kantorovich optimal transport problem {\it i.e.} maximises the functional
\[
\int_X \varphi(x) \dd\mu(x) +\int_Y \varphi^c(y) \dd\nu(y)
\]
where $\varphi^c$ denotes the $c$-transform of $\varphi$:
\begin{equation}\label{ctransffi}
\varphi^c(y):=\min_{x\in X} \{c(x,y)-\varphi(x)\}.
\end{equation}
In an euclidean setting, there are well-known conditions on $c$, the so-called \emph{twist} or \emph{generalised Spence-Mirrlees condition}, see~\cite{gcmonge}, and $\mu$ which guarantee that an optimal $\gamma$ necessarily is pure whatever $\nu$ is:
\begin{coro}[Purity of the equilibrium]\label{purityofCNE}
Assume that $X=\clos{\Omega}$ where $\Omega$ is some open connected bounded subset of $\R^d$ with negligible boundary, that $\mu$ is absolutely continuous with respect to the Lebesgue measure, that $c$ is differentiable with respect to its first argument, that $\nabla_x c$ is continuous on $\R^d\times Y$ and that it satisfies the twist condition: 
\[\mbox{for every $x\in X$, the map $y\in Y\mapsto \nabla_x c(x,y)$ is injective,}\]
then for every $\nu\in \PP(Y)$, $\Pi_0(\mu, \nu)$ consists of a single element and the latter is of the form $\gamma=({\rm{id}}, T)_\#\mu$. Hence every Cournot-Nash equilibrium is pure and actually fully determined by its second marginal.
\end{coro}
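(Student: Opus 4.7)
The strategy is the standard Brenier--Gangbo--McCann argument adapted to a twisted cost. The plan is to (i) invoke Kantorovich duality to produce a Lipschitz $c$-concave potential $\varphi$, (ii) differentiate $\varphi$ at a $\mu$-typical point and read off a first-order condition $\nabla\varphi(x)=\nabla_x c(x,y)$ from the equality case, and (iii) invert this relation in $y$ using the twist assumption to exhibit the transport map.

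More precisely, fix $\nu\in\P(Y)$ and any $\gamma\in\Pi_o(\mu,\nu)$. Compactness of $X$ and $Y$ and Kantorovich duality provide a $c$-concave potential $\varphi\in\C(X)$, that is $\varphi(x)=\min_{y\in Y}\{c(x,y)-\varphi^c(y)\}$, satisfying
\[
\varphi(x)+\varphi^c(y)\le c(x,y)\quad\text{on }X\times Y,
\]
with equality $\gamma$-a.e. Since $\nabla_x c$ is continuous on the compact set $X\times Y$, it is bounded there by some $L$, so $c(\cdot,y)$ is $L$-Lipschitz uniformly in $y\in Y$. The explicit formula for $\varphi$ then exhibits it as an infimum of $L$-Lipschitz functions, hence $\varphi$ is itself Lipschitz on $\overline\Omega$. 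Rademacher's theorem gives differentiability of $\varphi$ at $\Leb^d$-a.e.\ point of $\Omega$, and combining this with $\mu\ll\Leb^d$ and $\Leb^d(\partial\Omega)=0$ yields $\mu$-a.e.\ differentiability at interior points.

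Next I would exploit the equality case. For any $(x,y)\in\spt\gamma$ with $x\in\Omega$ a point of differentiability of $\varphi$, the function $x'\mapsto c(x',y)-\varphi(x')$ attains its minimum (namely $\varphi^c(y)$) at $x'=x$, so the first-order condition $\nabla\varphi(x)=\nabla_x c(x,y)$ holds. The twist assumption implies that $y$ is the unique element of $Y$ satisfying this identity; setting $T(x):=y$ defines a Borel map on a set of full $\mu$-measure. Thus $\gamma$ is concentrated on the graph of $T$, and the marginal constraint ${\pi_X}_\#\gamma=\mu$ forces $\gamma=(\id,T)_\#\mu$.

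Finally, for uniqueness, take two plans $\gamma_1,\gamma_2\in\Pi_o(\mu,\nu)$. Their average $\gamma_m:=(\gamma_1+\gamma_2)/2$ lies in $\Pi(\mu,\nu)$ and has the same transport cost, hence is optimal; by the argument above, $\gamma_m$ is concentrated on a graph. Since $\gamma_1,\gamma_2\le 2\gamma_m$, both are concentrated on that same graph and share the first marginal $\mu$, so $\gamma_1=\gamma_2$. I expect the delicate step to be the regularity argument for $\varphi$ and the passage from $\Leb^d$-a.e.\ differentiability to $\mu$-a.e.\ differentiability at \emph{interior} points of $\Omega$: the hypotheses $\mu\ll\Leb^d$ and $\Leb^d(\partial\Omega)=0$ are exactly what make this step go through, whereas every other ingredient (duality, twist, graph uniqueness) is standard once the potential is known to be differentiable where needed.
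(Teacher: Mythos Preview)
Your argument is correct and is precisely the standard Brenier--Gangbo--McCann reasoning under the twist condition. Note, however, that the paper does not supply its own proof of this corollary: it is stated as a consequence of well-known optimal transport facts (the reference \cite{gcmonge} is given just before the statement), and the paper moves on immediately to a remark about the one-dimensional Spence--Mirrlees case. So there is nothing to compare your proof against in the paper itself; you have simply written out in full the classical argument that the authors take for granted.

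One minor point of presentation: when you write ``for any $(x,y)\in\spt\gamma$ with $x\in\Omega$ a point of differentiability of $\varphi$'', it is worth making explicit that the equality $\varphi(x)+\varphi^c(y)=c(x,y)$, which a priori holds only $\gamma$-a.e., actually holds on all of $\spt\gamma$ because the set $\{\varphi+\varphi^c=c\}$ is closed by continuity of $\varphi$, $\varphi^c$ and $c$. This is what allows you to fix $y$ and vary $x'$ freely in a neighborhood of $x$ to derive the first-order condition. The rest of your argument (Lipschitz bound on $\varphi$, Rademacher, $\mu\ll\Leb^d$ together with $\Leb^d(\partial\Omega)=0$, and the convexity trick for uniqueness) is entirely standard and correct.
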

Note that, in dimension $1$, the assumptions of Corollary~\ref{purityofCNE} on $c$ roughly amounts to the usual Spence-Mirrlees singe-crossing condition {\it i.e.} the strict monotonicity in $y$ of $\partial_x c$ or the fact that the mixed partial derivative $\partial^2_{xy} c$ has a constant sign.
\section{Uniqueness under monotonicity of $\V$}\label{uniqmonot}
In the framework of Mean-Field Games, Lions and Lasry~\cite{mfg2}, established that the monotonicity property of $\nu \mapsto \V[\nu]$ is enough to guarantee uniqueness of the equilibrium. A simple adaptation of their argument gives the following uniqueness result: 
\begin{thm}[Uniqueness of the equilibrium under monotonicity]\label{thmuniq1}
If $\nu \mapsto V[\nu]$ is strictly monotone in the sense that for every $\nu_1$ and $\nu_2$ in $\D$, one has 
\[\int_Y (\V[\nu_1]-\V[\nu_2]) \dd(\nu_1-\nu_2)\geq 0\]
and the inequality is strict whenever  $\nu_1\neq \nu_2$,  then, all the equilibria have the same second marginal. 
\end{thm}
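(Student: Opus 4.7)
The plan is to adapt the Lasry-Lions monotonicity argument from \cite{mfg2} to our Cournot-Nash setting. Let $\gamma_1, \gamma_2$ be two equilibria, with respective second marginals $\nu_1, \nu_2 \in \D$, and let $\varphi_1, \varphi_2 \in \C(X)$ be the corresponding potentials provided by Definition~\ref{defieqlocal}.

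My first step is to integrate the $m_0$-a.e.\ inequality $c(x,y)+\V[\nu_1](y) \geq \varphi_1(x)$ against $\gamma_2$. Here one small technical point must be addressed: the inequality is only guaranteed for $m_0$-a.e.\ $y$, so I must check that the exceptional $m_0$-null set $N\subset Y$ does not charge $\gamma_2$. This uses crucially the fact that, when $\V$ is of the form \pref{typical}, $\D$ consists of measures absolutely continuous with respect to $m_0$, so that $\gamma_2(X\times N)=\nu_2(N)=0$ (in the regular case of \pref{REG20}, the inequality is pointwise in $y$ and nothing needs to be checked). Combining the resulting integrated inequality with the complementary equality that holds $\gamma_1$-a.e., and using that both $\gamma_i$ have first marginal $\mu$, I obtain
\[ \int c\dd\gamma_2 + \int_Y \V[\nu_1]\dd\nu_2 \ \geq\ \int_X \varphi_1\dd\mu \ =\ \int c\dd\gamma_1 + \int_Y \V[\nu_1]\dd\nu_1. \]

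The second step exchanges the indices $1\leftrightarrow 2$ and runs exactly the same argument with $\gamma_2$, $\nu_2$, $\varphi_2$ in the role of $\gamma_1$, $\nu_1$, $\varphi_1$. Adding the two resulting inequalities, the transport contributions $\int c\dd\gamma_i$ cancel and one is left with
\[ \int_Y (\V[\nu_1]-\V[\nu_2])\dd(\nu_1-\nu_2) \ \leq\ 0. \]
Combined with the strict monotonicity hypothesis, which provides the reverse inequality (strict unless $\nu_1=\nu_2$), this forces $\nu_1=\nu_2$.

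I expect the only genuine subtlety to be the $m_0$-null set issue flagged above, which is essentially why the domain $\D$ was defined to consist of absolutely continuous measures in the first place. Beyond that, the argument is a purely algebraic cancellation between two duality inequalities; in particular it does not require invoking optimality of the $\gamma_i$ for the Monge-Kantorovich problem, nor any continuity or regularity of $\V$ beyond what is already built into the definition of an equilibrium.
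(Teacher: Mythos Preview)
Your proof is correct and follows essentially the same Lasry--Lions cancellation argument as the paper's own proof: integrate the equilibrium inequality for one pair against the other transport plan, swap indices, and add so that the $\int c\,\dd\gamma_i$ terms cancel. The only difference is cosmetic---the paper writes the inequalities as $\V[\nu_i](y)\geq \varphi_i(x)-c(x,y)$ before integrating---and you are in fact slightly more careful than the paper in flagging why the $m_0$-a.e.\ inequality can legitimately be integrated against $\gamma_j$ (namely because $\nu_j\in\D$ is absolutely continuous with respect to $m_0$).
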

\begin{proof}
Assume that  $(\nu_1, \gamma_1)$ and $(\nu_2, \gamma_2)$ are two equilibria. Let $\varphi_1$, $\varphi_2$ in $\C(X)$ be such that for $i \in \{1,2\}$ 
\[ 
\V[\nu_i](y)\geq \varphi_i(x)-c(x,y)\,, 
\]
for every $x$ and $m_0$-a.e. $y$ with an equality $\gamma_i$-a.e.. Integrating with respect to $\gamma_i$ and using the fact that $\gamma_i \in\Pi(\mu, \nu_i)$, we obtain for $i \in \{1,2\}$ 
\[
\int_Y \V[\nu_i] \dd \nu_i =\int_X \varphi_i \dd \mu-\int_{X\times Y} c \dd\gamma_i\;,
\]
whereas for $i\neq j$ 
\[
\int_{Y} \V[\nu_i] \dd \nu_j \geq \int_X \varphi_i \dd\mu-\int_{X\times Y} c \dd\gamma_j\;.
\]
Substracting, we obtain 
$$
\int_{Y} \V[\nu_1] \dd (\nu_1-\nu_2)\leq \int_{X\times Y} c \dd(\gamma_2-\gamma_1) \quad\mbox{and}\quad \int_{Y} \V[\nu_2] \dd (\nu_2-\nu_1)\leq  \int_{X\times Y} c \dd(\gamma_1-\gamma_2) \;.
$$
So that
$$
\int_{Y} (\V[\nu_1] -\V[\nu_2])\dd (\nu_1-\nu_2)\le 0\;. 
$$
The monotonicity assumption then ensures that $\nu_1=\nu_2$. 
\end{proof}
Typical examples of strictly monotone maps are given by purely local congestion terms $\V[\nu](y)=f(y, \nu(y))$ with $f$ increasing in its second argument. On the contrary, typical regular non-local terms are not monotone. 

Let us however give an example where the congestion effect dominates the canonical interaction term: consider
\[
\V[\nu](y):=\nu(y)+\int_Y \phi(y, z) \,\nu(z)\dd z\;,
\]
with $\D=\L^2(m_0)$. As a simple application of Cauchy-Schwarz inequality, if 
\[
\int_{Y\times Y} \phi^2(y, z) \dd m_0 \otimes\! \dd m_0 <1
\]
then we have
\[
\int_{Y} (\V[\nu_1]-\V[\nu_2]) \dd(\nu_1-\nu_2)\ge \Vert \nu_1-\nu_2\Vert^2_{\L^2(m_0)}(1-\Vert \phi \Vert^2_{\L^2(m_0\otimes m_0)})\;.
\] 
So that the uniqueness result of Theorem~\ref{thmuniq1} applies in this case. 
\section{Quadratic cost: equilibria by best-reply iteration}\label{bri}
In this section, we adopt a direct approach when $c$ is quadratic and $\V[\nu]$ satisfies some suitable convexity condition.
Throughout this section, we assume
\begin{itemize}
\item $X=\clos{\Omega}$, $Y=\clos{U}$, where $\Omega$ and $U$ are some open bounded convex subsets of $\R^d$,
\item the cost is quadratic:
\[
c(x,y):=\frac{1}{2} \vert x-y \vert^2, \; \forall (x,y)\in \R^d\times \R^d
\]
\item $\mu$ is absolutely continuous with respect to the Lebesgue measure on $X$ and has a bounded density,
\item $\V[\nu]$ is a smooth and convex function for every $\nu\in \P(Y)$\footnote{This is the case, for instance, if $\V[\nu]$ has the form 
\[
\V[\nu](y):=\int_Y \phi(y,z) \dd\nu(z)
\]
with $\phi$ smooth and convex  with respect to its first argument.},
\item for every $\nu\in \P(Y)$ and every $x\in X$, the solution of 
\begin{equation}\label{br0}
\inf_{y\in Y} \left\{\frac{1}{2} \vert x-y \vert^2 +\V[\nu](y)\right\}
\end{equation}
belongs to $U$\footnote{This is the case as soon as $\V[\nu]$ fulfils some coercivity assumption and $U$ is chosen large enough.}.
\end{itemize}
In this case the solution of~\pref{br0} satisfies the following first-order condition: 
\[
y=(\id+\nabla \V[\nu])^{-1}(x)\;.
\]
If agents have a {\it prior} $\nu$ on the other agents' actions, their cost-minimising behaviour leads to another {\it a posteriori} measure on the action space $Y$, namely
\begin{equation}\label{defdeT}
T\nu:=(\id+\nabla \V[\nu])^{-1}_\# \mu\;.
\end{equation}
Clearly,  $(\gamma, \nu)$ is an equilibrium if and only if $\nu=T\nu$ and $\gamma=(\id, (\id+\nabla \V[\nu])^{-1})_\# \mu$. 
Looking for an equilibrium thus amounts to find a fixed point of $T$.  We shall see some additional conditions that ensure that $T$ is a contraction of $\P(Y)$ endowed with the $1$-Wasserstein distance $\W_1$\footnote{By definition the $1$-Wasserstein distance $\W_1$ between probability measures $\nu_1$ and $\nu_2$ is the least average distance for transporting $\nu_1$ into $\nu_2$:
\[
\W_1(\nu_1, \nu_2):=\inf_{\eta\in \Pi(\nu_1, \nu_2)} \int_{Y\times Y}  \vert y_1-y_2\vert \dd\eta (y_1, y_2)\;.
\]}. Since $(\P(Y), \W_1)$ is a complete metric space, these conditions will therefore imply the existence and the uniqueness of an equilibrium. More importantly, from a numerical point,  this equilibrium can be approximated by the iterates of $T$ applied to any $\nu_0\in \P(Y)$). Our additional assumptions read as : there exists $\lambda >0$, $C\geq 0$ and $M>0$ such that for every $(\nu_1, \nu_2)\in \P(Y)\times \P(Y)$ the following inequalities hold
\begin{equation}\label{condfp1}
D^2 \V[\nu_1] \geq \lambda \;\id \quad \mbox{in $X$} 
\end{equation}
\begin{equation}\label{condfp2}
\det(\id +D^2 \V[\nu_1]) \leq M  \quad\mbox{in $X$} 
\end{equation}
\begin{equation}\label{condfp3}
\int_{Y} \vert \nabla \V[\nu_1](y)-\nabla \V[\nu_2](y)\vert \dd y \leq C \W_1(\nu_1, \nu_2) \;.
\end{equation}
\begin{thm}[Convergence of the best-reply iteration scheme]\label{thmcontract}
Under the assumptions of the beginning of the section, if~\pref{condfp1},~\pref{condfp2} and~\pref{condfp3} hold and if 
\begin{equation}\label{condfp4}
M\,C \,\Vert \mu\Vert_{\L^\infty} <1+\lambda 
\end{equation}
then the map $T$ defined by~\pref{defdeT} is a contraction of $(\P(Y), \W_1)$. Therefore for every $\nu_0\in \P(Y)$, the sequence $(T^n \nu_0)_n$ converges to $\nu$ in the distance $\W_1$, that is for the weak-$*$ topology. As a consequence there exists a Cournot-Nash unique equilibrium.
\end{thm}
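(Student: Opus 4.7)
My plan is to apply Banach's fixed-point theorem to the operator $T$ on the complete metric space $(\P(Y), \W_1)$, showing that the hypotheses~\pref{condfp1}--\pref{condfp4} force $T$ to be a strict contraction with Lipschitz constant $MC \|\mu\|_{L^\infty}/(1+\lambda)<1$. To begin, I note that~\pref{condfp1} makes $\id + \nabla \V[\nu]$ a $C^1$ diffeomorphism from $U$ onto its image for every $\nu$, so that $S_\nu:=(\id+\nabla \V[\nu])^{-1}$ is well-defined on $X$ (and takes values in $U$ by the assumption that the minimiser in~\pref{br0} lies in $U$). Thus $T\nu=(S_\nu)_\#\mu$, and the coupling $(S_{\nu_1}, S_{\nu_2})_\#\mu$ lies in $\Pi(T\nu_1, T\nu_2)$, yielding immediately
\[
\W_1(T\nu_1, T\nu_2)\leq \int_X |S_{\nu_1}(x)-S_{\nu_2}(x)|\dd\mu(x).
\]

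The next step is to get a pointwise bound on $S_{\nu_1}-S_{\nu_2}$. Writing $y_i:=S_{\nu_i}(x)$, the defining relation $y_i+\nabla \V[\nu_i](y_i)=x$ gives after subtraction
\[
(y_1-y_2)+(\nabla \V[\nu_1](y_1)-\nabla \V[\nu_1](y_2))=\nabla \V[\nu_2](y_2)-\nabla \V[\nu_1](y_2);
\]
taking the scalar product with $y_1-y_2$, applying the $\lambda$-strong convexity~\pref{condfp1} to bound the left-hand side below by $(1+\lambda)|y_1-y_2|^2$, and then Cauchy--Schwarz on the right, produces
\[
|S_{\nu_1}(x)-S_{\nu_2}(x)|\leq \frac{1}{1+\lambda}\,\bigl|\nabla \V[\nu_1](S_{\nu_2}(x))-\nabla \V[\nu_2](S_{\nu_2}(x))\bigr|.
\]

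The third and most delicate step is a change of variables: integrating the pointwise bound against $\mu$ and using $(S_{\nu_2})_\#\mu=T\nu_2$ rewrites the right-hand side as $\int_Y |\nabla \V[\nu_1]-\nabla \V[\nu_2]|\dd (T\nu_2)$. Denoting by $\rho_\mu$ the density of $\mu$, the Radon--Nikodym derivative of $T\nu_2=(S_{\nu_2})_\#\mu$ with respect to Lebesgue measure on $Y$ is $\rho_\mu\circ(\id+\nabla \V[\nu_2])\cdot \det(\id+D^2\V[\nu_2])$, which by~\pref{condfp2} is bounded above by $M\|\mu\|_{L^\infty}$. Combining with the Lipschitz-type bound~\pref{condfp3} gives
\[
\W_1(T\nu_1, T\nu_2)\leq \frac{MC\|\mu\|_{L^\infty}}{1+\lambda}\,\W_1(\nu_1, \nu_2),
\]
and~\pref{condfp4} makes the ratio strictly less than $1$. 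Banach's theorem then yields a unique fixed point $\nu\in \P(Y)$ with $\W_1$-convergence (equivalently, weak-$*$ convergence since $Y$ is compact) of $T^n\nu_0$ to $\nu$ for any starting $\nu_0$, and the associated $\gamma=(\id, S_\nu)_\#\mu$ is, by the observation preceding the theorem, the unique Cournot-Nash equilibrium.

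I expect the main obstacle to be the rigorous justification of the change of variables and the formula for the pushforward density: this requires that $\id+\nabla \V[\nu_2]$ be a genuine $C^1$ diffeomorphism (granted by~\pref{condfp1} plus the smoothness of $\V[\nu_2]$) and almost-everywhere control on its Jacobian via~\pref{condfp2}. Once this transfer from integration against $\mu$ to integration against Lebesgue on $Y$ is established, combining~\pref{condfp3} and~\pref{condfp4} closes the contraction argument routinely.
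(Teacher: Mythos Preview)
Your proof is correct and follows essentially the same approach as the paper's own proof: the same transport-plan coupling to bound $\W_1(T\nu_1,T\nu_2)$, the same pointwise estimate on $S_{\nu_1}-S_{\nu_2}$ via the $\lambda$-strong convexity of $\V[\nu_1]$, the same change of variables to pass from integration against $\mu$ to integration against $T\nu_2$, and the same density bound via~\pref{condfp2} before invoking~\pref{condfp3},~\pref{condfp4} and Banach's theorem. Your write-up is in fact slightly more explicit than the paper's about why the change-of-variables step is justified.
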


\begin{proof}
Let $(\nu_1, \nu_2)$ be in $\P(Y)$. Since  $((\id+\nabla \V[\nu_1])^{-1},(\id+\nabla \V[\nu_2])^{-1})_\# \mu$ belongs to $\Pi(T\nu_1, T\nu_2)$, we first have
\begin{equation}\label{inegw1}
\W_1(T\nu_1, T\nu_2)\leq \int_{X} \left\vert (\id+\nabla \V[\nu_1])^{-1}-(\id+\nabla \V[\nu_2])^{-1}\right\vert \dd\mu\;.
\end{equation}
Now let $x\in X$ and $y_i:=(\id+\nabla \V[\nu_i])^{-1}(x)$, we then write
\[
y_1-y_2=\nabla \V[\nu_2] (y_2)-\nabla \V[\nu_1](y_1)
= \nabla \V[\nu_1] (y_2)-\nabla \V[\nu_1](y_1)+(\nabla \V[\nu_2]-\nabla \V[\nu_1])(y_2)\;.
\]
Taking the inner product with $y_1-y_2$, using~\pref{condfp1} and recalling that $D^2 f \geq \lambda \; \id$ implies that $(\nabla f(y_1)-\nabla f(y_2)) \cdot (y_1-y_2) \geq \lambda \vert y_1-y_2\vert^2$, we obtain
\begin{multline*}
\vert y_1-y_2\vert^2 = (y_1-y_2)\cdot \left( \V[\nu_1] (y_2)-\nabla \V[\nu_1](y_1) + (\nabla \V[\nu_2]-\nabla \V[\nu_1])(y_2)\right)  \\ 
\leq -\lambda \vert y_1-y_2\vert^2 +\vert y_1-y_2\vert \cdot \vert (\nabla \V[\nu_2]-\nabla \V[\nu_1])(y_2) \vert\;.
\end{multline*}
So that
\begin{align*}
\vert ((\id+\nabla \V[\nu_1])^{-1}\!-\!(\id+\nabla \V[\nu_2])^{-1})(x) \vert &= \vert y_1-y_2\vert\\
 &\leq \frac{1}{1+\lambda} \left\vert (\nabla \V[\nu_2]-\nabla \V[\nu_1])(y_2) \right\vert\\
 &= \frac{1}{1+\lambda}  \left\vert (\nabla \V[\nu_2]-\nabla \V[\nu_1])\! \circ \! (\id+\nabla \V[\nu_2])^{-1}(x) \right\vert
\end{align*}
Recalling~\pref{inegw1} and using the fact that $(\id+\nabla \V[\nu_2])^{-1}_\# \mu=T\nu_2$, we then obtain
\begin{equation}\label{inegw12}
\W_1(T\nu_1, T\nu_2)\leq \frac{1}{1+\lambda} \int_{Y} \vert \nabla \V[\nu_2 ]-\nabla \V[\nu_1] \vert \dd T\nu_2\;.
\end{equation}
Now it follows from the fact that $(\id+\nabla \V[\nu_2])_\# T\nu_2=\mu$, the injectivity of $\id+\nabla \V[\nu_2]$ and  the change of variables formula that $T\nu_2$ has a density with respect to the Lebesgue measure, again denoted $T\nu_2$, for $y\in (\id+\nabla \V[\nu_2])^{-1}(X)$ given by 
\[ T\nu_2 (y)=\mu(y+\nabla \V[\nu_2](y))\det (\id +D^2 \V[\nu_2](y)).\]
Finally, using \pref{inegw12}-\pref{condfp2} and \pref{condfp3}, we obtain
\begin{equation*}
 \W_1(T\nu_1, T\nu_2)\leq \frac{\Vert \mu \Vert_{L^\infty} M}{1+\lambda} \int_{Y} \vert \nabla \V[\nu_2 ](y)-\nabla \V[\nu_1](y) \vert \dd y 
 \leq \frac{\Vert \mu \Vert_{L^\infty} MC}{1+\lambda} \W_1(\nu_1, \nu_2)\;.
\end{equation*}
The conclusion thus follows from Assumption~\pref{condfp4} and Banach's fixed point theorem. 
\end{proof}

It may seem difficult at first glance to check the assumptions of Theorem~\ref{thmcontract}. The following result gives a class of examples: consider the case where
\begin{equation}\label{exemplebri}
\V[\nu](y)=V_0(y)+\eps \int_Y \phi(y,z) \dd\nu(z)
\end{equation}
where $\eps>0$ is a scalar parameter, capturing the size of interaction for instance.
\begin{coro}[A class of example for Theorem~\ref{thmcontract}]
Assume that $\nu\mapsto V[\nu]$ has the form~\pref{exemplebri} where $V_0$ is a smooth and convex function such that $D^2 V_0 \geq \lambda_0 \,\id$ on $Y$ with $\lambda_0>0$ and $\phi$ is a $\C^2(\R^d\times \R^d)$ function.  If $\eps$ is small enough, then there is a unique Cournot-Nash equilibrium.
\end{coro}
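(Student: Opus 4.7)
The plan is to verify each of the four assumptions \pref{condfp1}--\pref{condfp4} of Theorem~\ref{thmcontract} by taking $\eps$ small enough. Since $Y$ is compact and $\phi\in\C^2(\R^d\times\R^d)$, set $K\ddd \sup_{(y,z)\in Y\times Y}\|D^2_{yy}\phi(y,z)\|$ and $K'\ddd \sup_{(y,z)\in Y\times Y} \|D^2_{yz}\phi(y,z)\|$; both are finite.

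First, for~\pref{condfp1}, differentiating twice under the integral sign yields
\[
D^2 \V[\nu](y) = D^2 V_0(y) + \eps\int_Y D^2_{yy}\phi(y,z)\dd\nu(z)\;,
\]
so $D^2\V[\nu]\geq (\lambda_0-\eps K)\id$. Pick $\eps$ so small that $\lambda\ddd \lambda_0-\eps K>0$. The same upper bound gives $\|D^2 \V[\nu]\|\leq \|D^2V_0\|_\infty+\eps K$, and thus $\det(\id+D^2\V[\nu])\leq (1+\|D^2V_0\|_\infty+\eps K)^d\ddd M$, which is~\pref{condfp2} with $M$ bounded uniformly as $\eps\to 0$.

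For~\pref{condfp3}, observe that
\[
\nabla\V[\nu_1](y)-\nabla\V[\nu_2](y)=\eps\int_Y\nabla_y\phi(y,z)\dd(\nu_1-\nu_2)(z)\;.
\]
For each fixed $y\in Y$, the map $z\mapsto \nabla_y\phi(y,z)$ is Lipschitz with constant at most $K'$ (componentwise, up to a dimensional factor). The Kantorovich--Rubinstein duality formula for $\W_1$ then yields
\[
\bigl\vert\nabla\V[\nu_1](y)-\nabla\V[\nu_2](y)\bigr\vert\leq \eps\,\sqrt{d}\,K'\,\W_1(\nu_1,\nu_2)\;,
\]
and integrating over $Y$ gives~\pref{condfp3} with $C=\eps\,\sqrt{d}\,K'\,|Y|$, which is linear in $\eps$.

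Finally, the key smallness condition~\pref{condfp4} becomes $\eps\,\sqrt{d}\,K'\,|Y|\,\|\mu\|_{L^\infty}\,M<1+\lambda_0-\eps K$. Since $M$ stays bounded and $\lambda_0>0$ is fixed, the left-hand side is $O(\eps)$ while the right-hand side converges to $1+\lambda_0>0$; therefore \pref{condfp4} holds for $\eps$ small enough. All four hypotheses of Theorem~\ref{thmcontract} being satisfied, there exists a unique Cournot-Nash equilibrium. The only mildly technical point is the passage from the $\C^2$ regularity of $\phi$ to the Lipschitz-in-$z$ property used via Kantorovich--Rubinstein, but this is straightforward on the compact set $Y$.
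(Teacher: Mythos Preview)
Your argument is correct and follows essentially the same route as the paper: verify \pref{condfp1}--\pref{condfp4} with $\lambda=\lambda_0+O(\eps)$, $M=(1+\Lambda_0+O(\eps))^d$, and $C=O(\eps)$ via the Kantorovich--Rubinstein duality applied to $z\mapsto\nabla_y\phi(y,z)$. The only difference is cosmetic---you make the constants explicit ($K$, $K'$, $\sqrt{d}$, $|Y|$) where the paper writes $O(\eps)$.
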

\begin{proof} It is enough to check that the map $T$ defined by~\pref{defdeT} satisfies~\pref{condfp1}-\pref{condfp2}-\pref{condfp3}-\pref{condfp4} and apply Theorem~\ref{thmcontract}.

Let $\Lambda_0\geq \lambda_0$ be such that $D^2 V_0 \leq \Lambda_0 \,\id$ on $Y$. It is clear that~\pref{condfp1} and~\pref{condfp2} hold respectively with $\lambda=\lambda_0+O(\eps)$  and $M=(1+\Lambda_0 +O(\eps))^d$. As far as~\pref{condfp3} is concerned, we recall the Kantorovich duality formula for $\W_1$, see~\cite{villani, villani2} for details:  
\[
\W_1(\nu_1, \nu_2):=\sup \left\{\int_{Y}  u \dd(\nu_1-\nu_2)\; : \; u \mbox{ $1$-Lipschitz}\right\}.
\]
Hence, for any Lipschitz continuous function $u$ on $Y$ and any pair of probability measures $m_1$, $m_2$ on $Y$ we have 
$$
\left\vert \int_Y u \dd(m_1-m_2) \right\vert \leq \Lip(u,Y)\; \W_1(m_1, m_2)
$$
where $\Lip(u,Y)$ denotes the Lipschitz constant of $u$ on $Y$. Since for $(\nu_1, \nu_2)\in \P(Y)\times \P(Y)$ and $y\in Y$ we have
\[
\nabla \V[\nu_1](y)-\nabla \V[\nu_2](y) =\eps \int_Y \nabla_y \phi(y,z) \dd(\nu_1-\nu_2)(z)
\]
and $\phi$ is in $\C^2$ with  $\nabla_y \phi$ locally Lipschitz, we obtain
\[ 
\int_{Y} \left\vert \nabla \V[\nu_2 ](y)-\nabla \V[\nu_1](y) \right\vert \dd y \leq\eps \left(\int_Y \Lip(\nabla_y \phi(y,.) \dd y\right) \W_1(\nu_1, \nu_2) \;.
\]
So that~\pref{condfp3} holds with $C=O(\eps)$. Thus~\pref{condfp4} is satisfied for small enough $\eps$.  
\end{proof}

Computing the iterates of the map $T$ is easy so that one can find numerically the equilibrium, as illustrated in Figure~\ref{fig:2d} in dimension $2$. 
\begin{figure}[ht!]
 \begin{minipage}[t]{.5\linewidth}
\centering\epsfig{figure=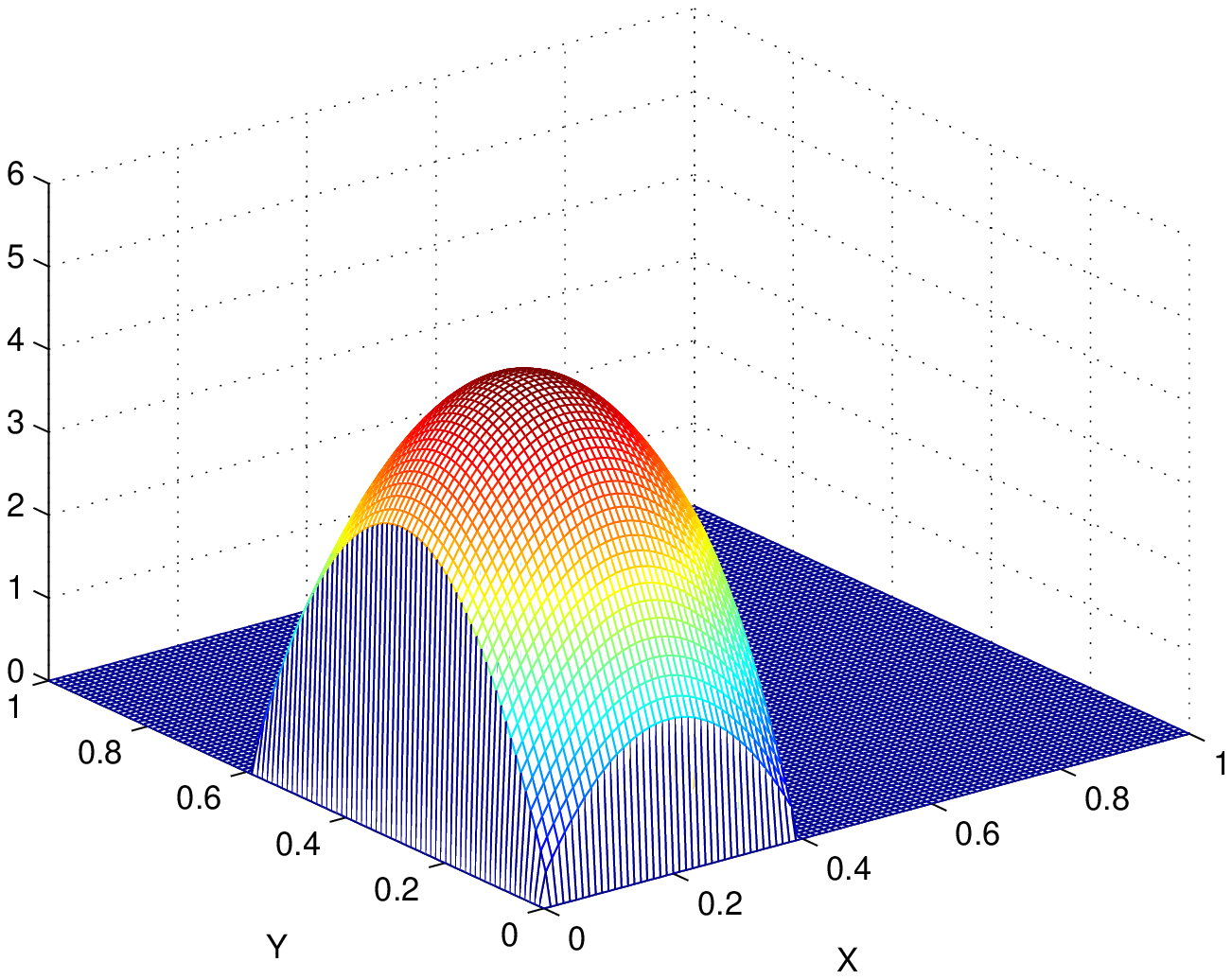,width=6.8cm}
 \end{minipage}\hfill \begin{minipage}[t]{.5\linewidth}
\centering\epsfig{figure=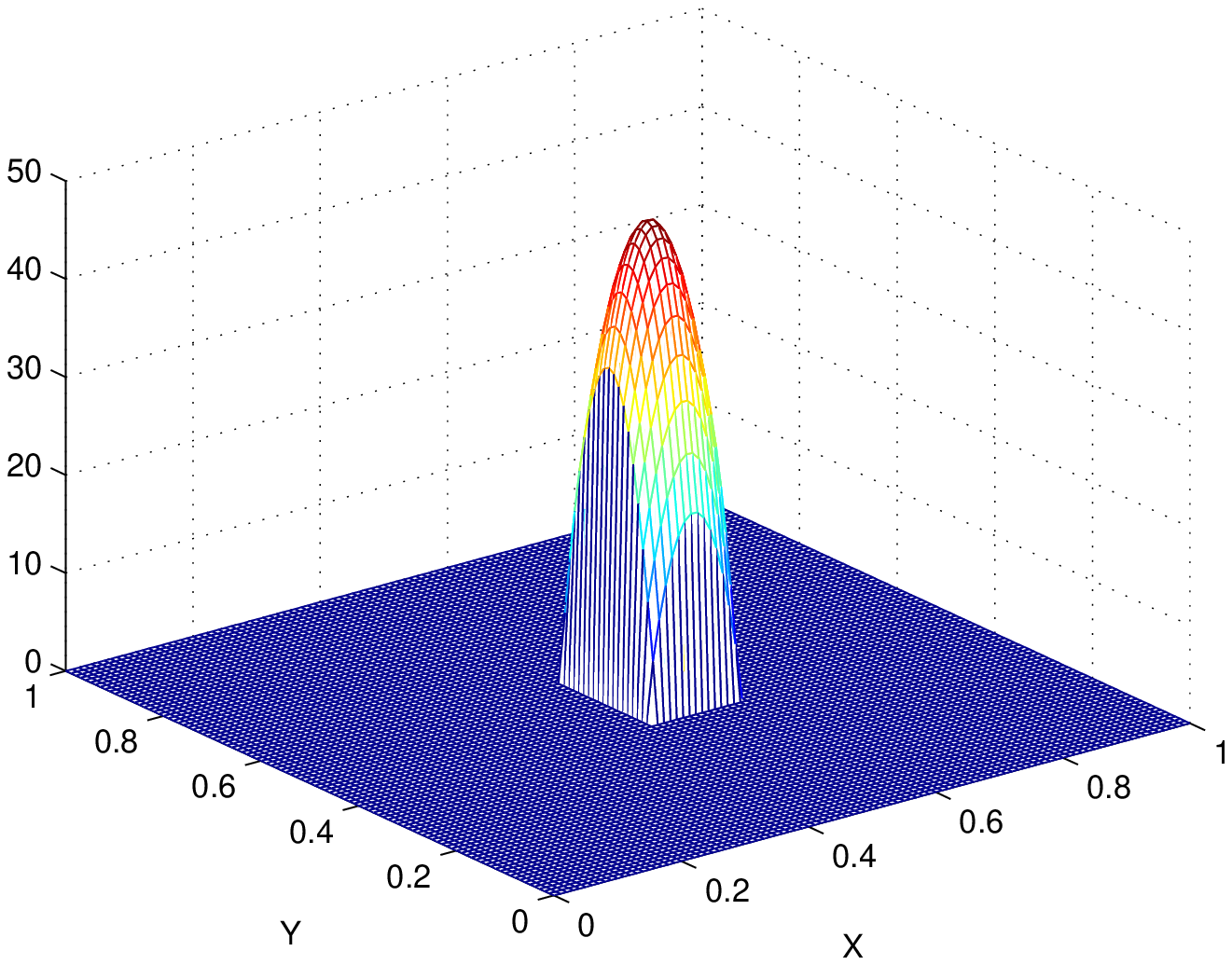,width=6.8cm}
 \end{minipage}
   \caption{\small Simulations obtained by best-reply iteration for $V_0(x,y)=(x-0.6)^2 + (y-0.7)^2$, $\phi(x,y)=|x-y|^4$ and $\eps=1/10$: the distribution of type $\mu$ on the left, distribution $\nu$ of the agents' action on the right.} \label{fig:2d}
\end{figure}
\section{Combining the variational approach with a fixed-point argument}\label{combivar}
\subsection{Symmetric interactions: a potential game approach}
In~\cite{abgc}, we obtain Cournot-Nash equilibria by a variational approach related to optimal transport. As already recalled in Section~\ref{sepcase}, under the separable form~\pref{sep}, if $\gamma$ is a Cournot-Nash equilibrium and $\nu$ denotes its second marginal then $\gamma\in \Pi_o(\mu, \nu)$ {\it i.e.} it solves the optimal transport problem:
\begin{equation}
\W_c(\mu, \nu):=\inf_{\gamma\in \Pi(\mu, \nu)} \int_{X\times Y} c(x,y) \dd \gamma(x,y)\;.
\end{equation}
If, in addition, externalities take the typical form
\[
\V[\nu](y)=f(y, \nu(y))+  \I[\nu](y) \quad\mbox{with}\quad \I[\nu](y)=\int_Y \phi(y,z) \dd\nu(z)
\]
with $f(y,.)$ increasing satisfying the growth condition~\pref{growth} and $\phi$ is continuous and \emph{symmetric} {\it i.e.} $\phi(y,z)=\phi(z,y)$, then we can associate to $\V[\nu]$ the functional
\[
\E[\nu]=\int_Y F(y,\nu(y))\dd m_0(y)+\frac{1}{2} \int_{Y\times Y} \phi(y,z) \dd\nu(y) \dd\nu(z)\;.
\]
In this setting, $\V$ is the first variation of $\E$, $\V[\nu] ={\delta \E}/{\delta \nu}$, in the sense that for every $(\rho, \nu)\in  \D^2$, we have
\[
\lim_{\eps\to 0^+} \frac{\E[(1-\eps)\nu+\eps\rho]-\E[\nu]}{\eps}=\int_{Y} \V[\nu] \dd(\rho-\nu)\;.
\]
It is therefore natural to consider the variational problem
\begin{equation}\label{varequil}
\inf_{\nu\in \D} \J_\mu[\nu]\quad\mbox{where}\quad \J_\mu[\nu]:=\W_c(\mu, \nu)+\E[\nu]\;.
\end{equation}
We assume:\\
{\bf{(H)}}: $X=\clos{\Omega}$ where $\Omega$ is some open bounded connected subset of $\R^d$ with negligible boundary, $\mu$ is equivalent to the Lebesgue measure on $X$ and, for every $y\in Y$, $c(.,y)$ is differentiable with $\nabla_x c$ bounded on $X\times Y$.

Under Assumption~{\bf{(H)}}, $\W_c(\mu, \nu)$ is G\^ateaux-differentiable with respect to $\nu$. It is not hard to check that the first-order optimality condition for~\pref{varequil} actually gives Cournot-Nash equilibria, see~\cite[Section~4]{abgc} for details. Moreover the assumptions above on $f$ and $\phi$ guarantee the existence of a minimiser, see~\cite[Theorem~4.3]{abgc}, and lead to:
\begin{thm}[Minimizers are equilibria]\label{miniareeq}
Assume that {\bf{(H)}} holds,  that $f(y,.)$ is increasing, satisfies the growth condition~\pref{growth} and that $\phi$ is symmetric and continuous. If $\nu$ solves~\eqref{varequil} and $\gamma$ solves  $\W_c(\mu, \nu)$ then $\gamma$ is a Cournot-Nash equilibrium. In particular there exist Cournot-Nash equilibria. 
\end{thm}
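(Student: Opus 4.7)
The plan is to take a minimizer $\nu$ of $\J_\mu$ (whose existence is given by \cite[Theorem~4.3]{abgc}) and any $\gamma\in\Pi_o(\mu,\nu)$, then show via the first-order optimality condition for $\nu$ that $\gamma$ satisfies the equilibrium inequalities \pref{equipp} for a suitable $\varphi\in\C(X)$. The argument is the standard ``potential game'' computation: compute the right-directional derivative of $\J_\mu$ along convex combinations $\nu_\eps:=(1-\eps)\nu+\eps\rho$, $\rho\in\D$ (which stay in $\D$ since $\D$ is convex by the growth condition~\pref{growth}), and exploit the fact that the first variation of $\W_c(\mu,\cdot)$ is exactly the Kantorovich potential on $Y$.

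The key ingredient is Kantorovich duality. Under assumption {\bf (H)} there exists an optimal dual pair $(\varphi,\varphi^c)$ with $\varphi\in\C(X)$ and $\varphi^c$ given by \pref{ctransffi}, satisfying
\[
\W_c(\mu,\nu)=\int_X\varphi\dd\mu+\int_Y\varphi^c\dd\nu,\qquad c(x,y)\ge \varphi(x)+\varphi^c(y)\text{ on }X\times Y,
\]
with equality $\gamma$-a.e. Since $(\varphi,\varphi^c)$ remains admissible in the dual problem for any target $\tilde\nu\in\PP(Y)$, one has the subgradient inequality
\[
\W_c(\mu,\tilde\nu)\ge \W_c(\mu,\nu)+\int_Y\varphi^c\dd(\tilde\nu-\nu),
\]
which combined with convexity of $\W_c(\mu,\cdot)$ shows that the right-derivative of $\W_c(\mu,\cdot)$ at $\nu$ in the direction $\rho-\nu$ equals $\int_Y\varphi^c\dd(\rho-\nu)$. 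By the hypothesis that $\V[\nu]=\delta\E/\delta\nu$, the right-derivative of $\E$ at $\nu$ in the same direction is $\int_Y\V[\nu]\dd(\rho-\nu)$.

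Dividing $\J_\mu[\nu_\eps]-\J_\mu[\nu]\ge 0$ by $\eps$ and passing to the limit gives, for every $\rho\in\D$,
\[
\int_Y\bigl(\varphi^c+\V[\nu]\bigr)\dd(\rho-\nu)\ge 0.
\]
Setting $\lambda:=\int_Y(\varphi^c+\V[\nu])\dd\nu$ and testing against arbitrary densities $\rho\in\D$ absolutely continuous with respect to $m_0$ (dense in $\D$ in the relevant topology) yields $\varphi^c(y)+\V[\nu](y)\ge \lambda$ for $m_0$-a.e.~$y$, with equality $\nu$-a.e. Defining $\tilde\varphi:=\varphi+\lambda\in\C(X)$, the pointwise inequality $c\ge \varphi+\varphi^c$ then gives
\[
c(x,y)+\V[\nu](y)\ge \tilde\varphi(x)\qquad\forall x\in X,\ m_0\text{-a.e.}\ y,
\]
and on $\spt\gamma$ we have simultaneously $c(x,y)=\varphi(x)+\varphi^c(y)$ (optimality of $\gamma$) and $\varphi^c(y)+\V[\nu](y)=\lambda$ ($\nu$-a.e.~and hence $\gamma$-a.e., since $\nu={\pi_Y}_\#\gamma$), so the second line of \pref{equipp} holds $\gamma$-a.e.

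The main technical obstacle I foresee is the rigorous identification of the directional derivative of $\W_c$ and of $\E$: the lower bound for $\W_c$ is essentially free from duality, but matching upper and lower one-sided derivatives requires the dominated-convergence step for the nonlinear term $\int_Y F(y,\nu_\eps)\dd m_0$, which is exactly where the growth condition~\pref{growth} is used to control $(F(y,\nu_\eps)-F(y,\nu))/\eps$ by an integrable function. The passage from the integral inequality to the pointwise inequality for $\varphi^c+\V[\nu]$ also uses the continuity/regularity of $\varphi^c$ (Lipschitz on $Y$ when $c$ is Lipschitz, by \pref{ctransffi}) so that ``$m_0$-a.e.'' statements are meaningful and stable under density approximation of $\rho$. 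All these points are carried out in detail in \cite[Section~4]{abgc}.
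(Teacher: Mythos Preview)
Your proposal is correct and follows exactly the approach the paper has in mind: the paper does not give a detailed proof here but refers to \cite[Section~4]{abgc}, stating only that ``the first-order optimality condition for~\pref{varequil} actually gives Cournot-Nash equilibria'' and that existence of a minimiser follows from \cite[Theorem~4.3]{abgc}. Your sketch---Kantorovich duality to identify the first variation of $\W_c(\mu,\cdot)$ as $\varphi^c$, the relation $\V[\nu]=\delta\E/\delta\nu$, and the passage from the variational inequality $\int_Y(\varphi^c+\V[\nu])\dd(\rho-\nu)\ge 0$ to the pointwise conditions \pref{equipp}---is precisely the argument carried out in that reference, and you correctly flag the two technical points (G\^ateaux differentiability of $\W_c$ under {\bf(H)}, dominated convergence for the congestion term via~\pref{growth}) that require care.
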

In other words, the situation described above may be related to potential games. The main drawback of Theorem~\ref{miniareeq} lies in the symmetry assumption for the interaction term~$\phi$. Symmetry is essential for $\V$ to have a potential but assuming symmetry may not particularly realistic, we shall see in the next section how to cope with more general non-symmetric interactions. 
\subsection{Existence for non-symmetric interactions}
In this section, consider $\V[\nu]$ be the sum of a local congestion term and a regular term:
\begin{equation}\label{congereg}
\V[\nu](y):=f(y, \nu(y))+ \I[\nu](y)\;.
\end{equation}\label{growth2}
We assume that $f(y,.)$ is increasing, satisfies the power growth condition: for some $\alpha >0$ and $C>0$ 
\begin{equation}
\forall (y,t) \qquad \frac{1}{C}  (t^{\alpha}-1)   \le  f(y, t) \le C (t^{\alpha}+1)\;.
\end{equation}
We also assume that $\I[\nu] \in \C(Y)$ for every $\nu\in \P(Y)$ with
\begin{equation}\label{REG2}
\nu \mapsto \I[\nu] \mbox{ is a continuous map from $(\P(Y),\textrm{w}\!-\!*)$ to $(\C(Y),\|\cdot\|_\infty)$.}
\end{equation}

This framework covers the case of a general pairwise interaction term 
\[
\I[\nu](y):=\int_Y \phi(y,z) \dd\nu(z)
\]
or more generally
\[
\I[\nu](y):=\int_Y \phi(y,z_1, \cdots, z_n) \dd\nu(z_1) \cdots \dd\nu(z_n)
\]
with an arbitrary continuous $\phi$. In this setting we have
\begin{thm}[Existence of equilibria: non-symmetric interaction case]\label{existvar} Assume that~{\bf{(H)}} holds and that $\V$ has the form~\pref{congereg}. If $f(y,.)$ increasing and satisfies the growth condition~\pref{growth2} and $\I$ satisfies~\pref{REG2} then there exists at least one Cournot-Nash equilibrium. 
\end{thm}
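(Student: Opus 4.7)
The strategy is to reduce to the symmetric variational setting of Theorem~\ref{miniareeq} by freezing the regular interaction $\I[\nu]$, and then to recover a genuine equilibrium through a Kakutani-Ky Fan fixed-point argument. For a \emph{frozen} $\nu\in\D$, set $F(y,t):=\int_0^t f(y,s)\dd s$ and consider the auxiliary problem
\[
  \inf_{\rho\in\D}\ \J^\nu_\mu[\rho],\qquad \J^\nu_\mu[\rho]\ddd \W_c(\mu,\rho) + \int_Y F(y,\rho(y))\dd m_0(y) + \int_Y \I[\nu](y)\,\rho(y)\dd m_0(y).
\]
The last term is linear in $\rho$, so this is exactly the potential setting of Theorem~\ref{miniareeq} (with trivially symmetric interaction $\phi(y,z)=\tfrac12(\I[\nu](y)+\I[\nu](z))$, whose first variation is $\I[\nu]$). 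Hence the auxiliary problem admits a solution $\rho$, and any $\gamma\in\Pi_o(\mu,\rho)$ is a Cournot-Nash equilibrium of the frozen game: there exists $\varphi\in\C(X)$ with $c(x,y)+f(y,\rho(y))+\I[\nu](y)\geq\varphi(x)$, with equality $\gamma$-a.e.

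I would then set $\Phi(\nu)\ddd\argmin_{\rho\in\D}\J^\nu_\mu[\rho]$ and seek a fixed point $\nu^*\in\Phi(\nu^*)$, which by the previous step is exactly an equilibrium of the original game. Comparing $\J^\nu_\mu[\rho]$ at a minimizer $\rho$ with its value at the reference measure $m_0$, and using~\pref{growth2} together with the uniform bound $\sup_{\nu\in\P(Y)}\|\I[\nu]\|_\infty<+\infty$ (a consequence of~\pref{REG2} and weak-$*$ compactness of $\P(Y)$), I obtain a constant $A$ independent of $\nu$ such that $\Phi(\nu)\subset K$ for
\[
  K\ddd\left\{\rho\in\P(Y)\cap\L^{1+\alpha}(m_0)\ :\ \int_Y F(y,\rho(y))\dd m_0(y)\leq A\right\}\!.
\]
Thanks to~\pref{growth2}, the convexity of $F(y,\cdot)$, and the reflexivity of $\L^{1+\alpha}(m_0)$, $K$ is a convex, weakly compact, metrizable subset of $\L^{1+\alpha}$. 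The values $\Phi(\nu)$ are nonempty and convex since $\J^\nu_\mu$ is convex in $\rho$: convexity of $\rho\mapsto\W_c(\mu,\rho)$ follows from gluing transport plans, convexity of $F(y,\cdot)$ follows from monotonicity of $f(y,\cdot)$, and the cross term is linear.

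The heart of the argument is the verification that $\Phi$ has closed graph for the weak $\L^{1+\alpha}$ topology. Given $\nu_n\rightharpoonup\nu$ and $\rho_n\in\Phi(\nu_n)$ with $\rho_n\rightharpoonup\rho$ in $\L^{1+\alpha}(m_0)$, weak $\L^{1+\alpha}$ convergence implies weak-$*$ convergence of the associated probability measures (since $\C(Y)\subset\L^{(1+\alpha)'}(m_0)$), and~\pref{REG2} then yields $\I[\nu_n]\to\I[\nu]$ uniformly on $Y$. For an arbitrary test $\sigma\in\D$ I pass to the limit in $\J^{\nu_n}_\mu[\rho_n]\leq\J^{\nu_n}_\mu[\sigma]$: on the right, the Wasserstein and congestion terms are constant while $\int\I[\nu_n]\sigma\dd m_0\to\int\I[\nu]\sigma\dd m_0$ by uniform convergence; on the left, $\W_c(\mu,\cdot)$ is weak-$*$ continuous, $\int F(y,\cdot)\dd m_0$ is weakly lower semi-continuous in $\L^{1+\alpha}$, and the cross term $\int\I[\nu_n]\rho_n\dd m_0\to\int\I[\nu]\rho\dd m_0$ by combining uniform convergence of $\I[\nu_n]$ with the weak-$*$ convergence of $\rho_n\dd m_0$. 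Hence $\rho\in\Phi(\nu)$, and the Kakutani-Ky Fan theorem provides a fixed point $\nu^*$, which is the desired Cournot-Nash equilibrium.

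The main obstacle is precisely this closed-graph step: the sequence $\rho_n$ converges only weakly in $\L^{1+\alpha}$, so one must push a \emph{uniform} convergence of $\I[\nu_n]$ (granted by~\pref{REG2}) all the way through the cross term in order to pair it with the mere lower semi-continuity of the local congestion integral. This is exactly what the regularity assumption~\pref{REG2} is designed for, and it is what forces the decomposition of $\V[\nu]$ into a local congestion part and a regular non-local part rather than allowing purely local non-monotone nonlinearities.
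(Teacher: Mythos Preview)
Your argument is correct and follows essentially the same route as the paper: freeze $\nu$, minimise the auxiliary functional $\W_c(\mu,\cdot)+\int F(y,\cdot)\dd m_0+\int\I[\nu]\dd(\cdot)$, and close by a fixed-point argument in the weak $\L^{1+\alpha}$ topology using the uniform bound on minimisers coming from~\pref{growth2} and $\sup_\nu\|\I[\nu]\|_\infty<\infty$.

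The one simplification you miss is that the minimiser of $\J^\nu_\mu$ is \emph{unique}: since $f(y,\cdot)$ is increasing, $F(y,\cdot)$ is strictly convex, so the auxiliary problem has a single solution $G(\nu)$. The paper therefore works with a single-valued map $G$, checks its weak-$\L^p$ continuity (your closed-graph computation specialises immediately), and invokes Schauder rather than Kakutani--Ky Fan. This spares you the convexity-of-values discussion and streamlines the presentation, but the substance is identical. A minor remark: your parenthetical justification via the symmetric kernel $\phi(y,z)=\tfrac12(\I[\nu](y)+\I[\nu](z))$ is off by a factor of two in the first variation; it is cleaner to note that the linear term $\rho\mapsto\int\I[\nu]\dd\rho$ can be absorbed into the transport cost $\tilde c(x,y)=c(x,y)+\I[\nu](y)$ and invoke the optimality conditions directly.
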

\begin{proof}
Let $p=\alpha+1$ and $K$ be the set of $\L^p$ probability densities. For $\nu\in K$ let us consider the minimisation problem
\[
\inf_{\theta\in K} \left\{\W_c(\mu, \theta)+ \int F(y, \theta(y)) \dd y+\int \I[\nu] \dd\theta\right\} 
\]
where $F(y,.)$ is a primitive of $f(y,.)$. By standard lower semi-continuity arguments, this problem has at least a solution that is in fact unique by strict convexity of $F(y,.)$ and convexity of the other terms. Let us denote by $G(\nu)$ this minimiser. It is easy to check that~\pref{REG2} implies that the map $G$ is continuous with respect to the weak topology of $\L^p$. Moreover, the growth condition~\pref{growth2} implies that $G(K)$ is bounded in $\L^p$ and hence relatively compact for the weak topology of $\L^p$. Thanks to Schauder's fixed-point theorem, there exists $\nu\in K$ such that $\nu=G(\nu)$. Writing the optimality condition we straightforwardly see, e.g.~\cite[Proof of Theorem~3.2]{abgc},  that if $\gamma$ solves $\W_c(\mu, \nu)$ then $(\gamma, \nu)$ is actually a Cournot-Nash equilibrium. 
\end{proof}
\subsection{An ordinary differential equation for  equilibria in dimension one} 
We now consider the one-dimensional case where $X=Y=[0,1]$ (say), $m_0$ is the Lebesgue measure on $X$, $\mu$ is equivalent to the Lebesgue measure and the cost $c \in \C^2$ satisfies the Spence-Mirrlees condition:
\[
\partial^2_{xy} c(x,y)<0\;.
\]
We again consider a separable total cost of the form 
$$
c(x,y)+f(\nu(y))+\int_Y \phi(y,z) \;\nu(z) \dd m_0(z)
$$
with $f$ increasing and $\phi$ continuous (and not necessarily symmetric). Replacing the interaction term $$\int_Y \phi(y,z) \nu(z) \dd z$$ by a more general of the form $$H\left(y, \int_Y \phi(y,z_1, \ldots, z_n) \nu(z_1) \dd m_0(z_1)\ldots  \nu(z_n) \dd m_0(z_n)\right)$$ actually costs no generality but we will not consider this case for sake of simplicity. For the clarity of the exposition, we focus on the congestion cost $f$ of the form:
\[
f(\nu)=\log(\nu) \quad \mbox{ or }\quad f(\nu)=\nu^\alpha, \mbox{ with } \alpha \ge 1 \;.
\]
As shown in~\cite{abgc}, in the case $f(\nu)=\log(\nu)$, the Inada condition holds which guarantees that $\nu$ is positive everywhere on $[0,1]$. This needs not be the case when $f(\nu)=\nu^\alpha$, with $\alpha \ge 1$. In both cases, because of the Spence-Mirrlees condition, by Corollary~\ref{purityofCNE} we know that equilibria are pure {\it i.e.} if $(\gamma, \nu)$ is an equilibrium then $\gamma=(\id, T)_\#\mu$ for some map $T$ which is the optimal transport between $\mu$ and $\nu$. This map is well-known to be the unique non-decreasing map which transports $\mu$ to $\nu$. In dimension one, this optimal map $T$ is easy to compute, once $\nu$ is known: it is indeed given by the formula $T=F_\nu^{-1} \circ F_\mu$ where $F_\mu$ is the cumulative distribution function of $\mu$ and $F_\nu^{-1}$ is the quantile function of $\nu$. Finding an equilibrum $(\gamma, \nu)$ thus amounts to find the transport map $T$ which as we shall see is characterised by some non-linear and non-local ordinary differential equation.

The equilibrium condition~\ref{equipp} can be rewritten as
\begin{multline}
\min_{x\in [0,1]} \{ c(x,y)-\varphi(x)\} +f(\nu(y))+\int_0^1 \phi(y, z) \;\nu(z) \dd z\\
= \varphi^c(y)+f(\nu(y))+\int_0^1 \phi(y, z)\; \nu(z) \dd z\ge 0 \label{equ1}
\end{multline}
with an equality for $y=T(x)$ which is the point which realises the minimum above, {\it i.e.} 
\[\varphi(x)=c(x,T(x))-\varphi^c(T(x))= \min_{y\in [0,1]} \{c(x,y)-\varphi(y)\}\;.\]
The smoothness of $c$ implies that $\varphi$ is Lipschitz hence differentiable a.e.. For a point of differentiability of $\varphi$, the envelope theorem gives
\begin{equation}\label{equ2}
\varphi'(x)=\partial_x c(x,T(x)) \quad\mbox{and hence}\quad \varphi(x)=\varphi(0)+\int_0^x \partial_x c(s, T(s)) \dd s
\end{equation}
\subsubsection{The logarithmic case}
In the case $f(\nu)=\log(\nu)$, as already mentioned, $\nu$ is positive everywhere on $[0, 1]$. So that $T$ is increasing on $[0,1]$, $T(0)=0$ and $T(1)=1$. By~\pref{equ1}-\pref{equ2} and the fact that  $T_\#\mu=\nu$, we obtain
\[\begin{split}
\nu(T(x))&= \exp\left(-\varphi^c(T(x))-\int_{0}^1 \phi(T(x),z )\;\nu(z) \dd z\right)\\
&=\exp\left(\varphi(x)-c(x,T(x))-\int_{0}^1 \phi(T(x), T(y) ) \dd\mu(y) \right)\\
&=\exp\left(\varphi(0)+\int_0^x \partial_x c(s, T(s)) \dd s-c(x,T(x))-\int_{0}^1 \phi(T(x), T(y) ) \dd\mu(y) \right).
\end{split}\]
Now, the fact that $T_\#\mu=\nu$ can be expressed as 
\begin{equation}\label{ma1d}
\mu(x)=\nu(T(x)) \;T'(x)\;.
\end{equation}
Replacing and setting $C:={\rm e}^{-\varphi(0)}$ we find the following equation on $T$:
\begin{equation}\label{odeforT1}
T'(x)=C \mu(x) \exp\left(-\int_0^x \partial_x c(s, T(s)) \dd s+c(x,T(x))+\int_{0}^1 \phi(T(x), T(y) ) \dd\mu(y) \right)
\end{equation}
supplemented with the initial condition $T(0)=0$ and $T(1)=1$. Since $T(1)=1$ the constant $C$ is given by
\[\frac{1}{C}= \int_0^1 \exp\left(-\int_0^x \partial_x c(s, T(s)) \dd s+c(x,T(x))+\int_{0}^1 \phi(T(x), T(y) ) \dd\mu(y) \right) \dd\mu(x) \;.\]
This gives the following easy to implement iterative algorithm:

\paragraph{Iterative algorithm  1: logarithmic congestion:}
Consider a given $T_k$ increasing with $T_k(0)=0$, $T_k(1)=1$.
\begin{itemize}
\item Define $C_k$ as being the inverse of 
\[
\int_0^1 \exp\left(-\int_0^x \partial_x c(s, T_k(s)) \dd s+c(x,T_k(x))+\int_{0}^1 \phi(T_k(x), T_k(y) ) \dd\mu(y) \right) \dd\mu(x)\;.
\]
\item Define $S_k$ as being 
\[
C_k \,\mu(x)\, \exp\left(-\int_0^x \partial_x c(s, T_k(s)) \dd s+c(x,T_k(x))+\int_{0}^1 \phi(T_k(x), T_k(y) ) \dd\mu(y) \right)\;.
\]
\item Then $T_{k+1}$ is given by
\[
T_{k+1}(x):=\int_0^x S_k(s) \dd s\;.
\]
\end{itemize}
See Figure~\ref{fig:log} for an example of such an implementation.
\begin{figure}[ht!]
 \begin{minipage}[t]{.5\linewidth}
\centering\epsfig{figure=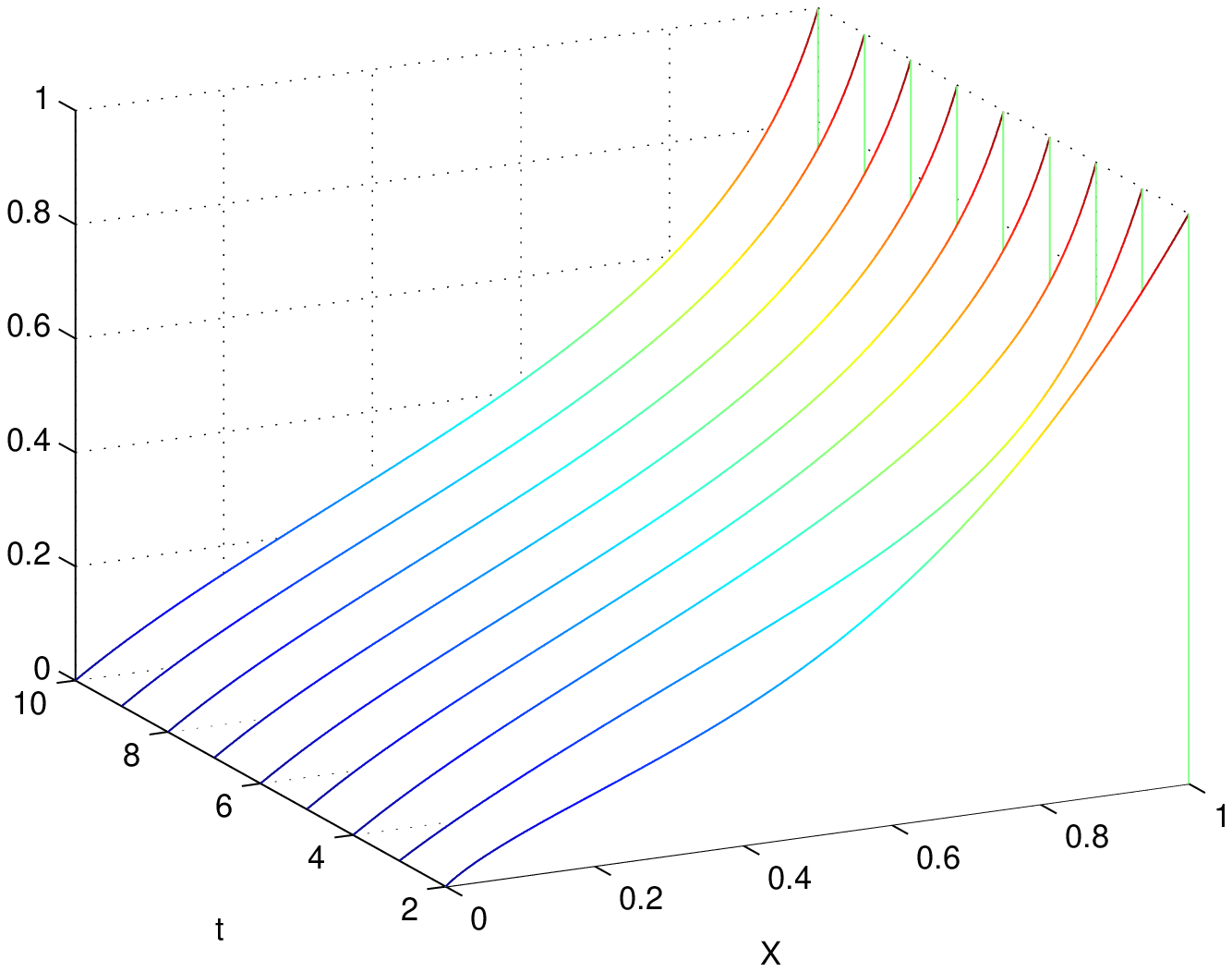,width=6.8cm}
 \end{minipage}\hfill \begin{minipage}[t]{.5\linewidth}
\centering\epsfig{figure=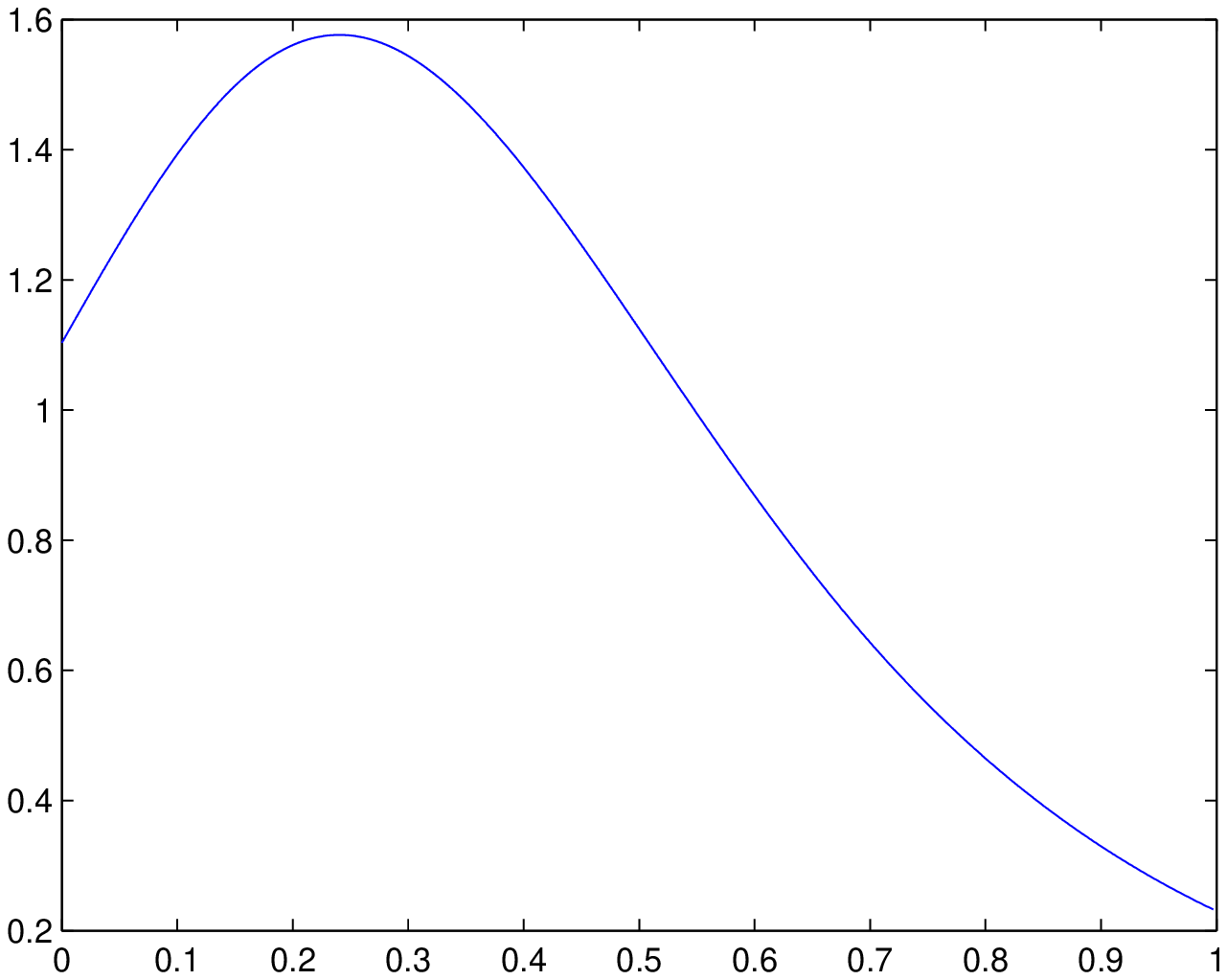,width=6.8cm}
 \end{minipage}
   \caption{\small Log case: Convergence for the iterates of algorithm 1 above for the transport on the left, and the density $\nu$ at the equilibrium on the right, in the case of a uniform $\mu$, $c(x,y)=\vert x-y\vert^{2.2}/2.2$ and a non-symmetric interaction given by $\phi(x,y)=2 \vert 3x/2-y \vert^{1.2}$.\label{fig:log}}
\end{figure} 
\subsubsection{Linear or power case}
Let us now consider the case where $f(\nu)=\nu^\alpha$, with $\alpha \ge 1$. The equilibrium condition can then be written as 
$$\nu(y)^\alpha+ \varphi^c(y)+ \int_0^1 \phi(y,z) \dd\nu(z) \ge \lambda\,,$$
for some constant $\lambda$, with an equality whenever $\nu(y)>0$. This condition can be rewritten as 
\[
\nu(y)=\left(\lambda- \varphi^c(y)-\int_0^1 \phi(y,z) \dd\nu(z)  \right)_+^{1/\alpha}\;.
\]
Since $\nu$ may vanish, $T$ may be discontinuous and the situation is actually more involved than in the log case. 
 Actually, it is better to look for the optimal transport between $\nu$ and $\mu$ which may have flat zones but is continuous. This transport is given by
\[
S=F_\mu^{-1}\circ F_\nu\;.
\]
The integration constant is contained in the $\lambda$ above so that we can normalise to $\varphi^c(0)=0$. We also have, as before,
\[
\varphi^c(y)=\int_0^y \partial_y c(S(s), s)) \dd s\;.
\]
This leads to the following iterative algorithm.
\paragraph{Iterative algorithm  2: linear or power congestion:}
Let us start with a probability density $\nu_k$ on $[0,1]$, then:
\begin{itemize}
\item[$\bullet$] Define the optimal transport between $\nu_k$ and $\mu$:
\[S_{k}=F_{\mu}^{-1} \circ F_{\nu_k}\;,\]
where $F_{\mu}^{-1}$ has to be computed only once,
\item[$\bullet$] Compute the Kantorovich  potential $\varphi_k^c$ by
\[\varphi_k^c(y)=\int_0^y \partial_y c(S_k(s), s) \dd s\;,\]
\item[$\bullet$] Compute the new density $\nu_{k+1}$ by 
\[\nu_{k+1}(y)=\left(\lambda_k- \varphi_k^c(y)-\int_0^1 \phi(y,z) \dd\nu_k(z)  \right)_+^{1/\alpha}\;,\]
where $\lambda_k$ is such that $\nu_{k+1}$ has total mass $1$.
\end{itemize}
See Figure~\ref{fig:done} for an example of implementation of this algorithm.
\begin{figure}[ht!]
 \begin{minipage}[t]{.5\linewidth}
\centering\epsfig{figure=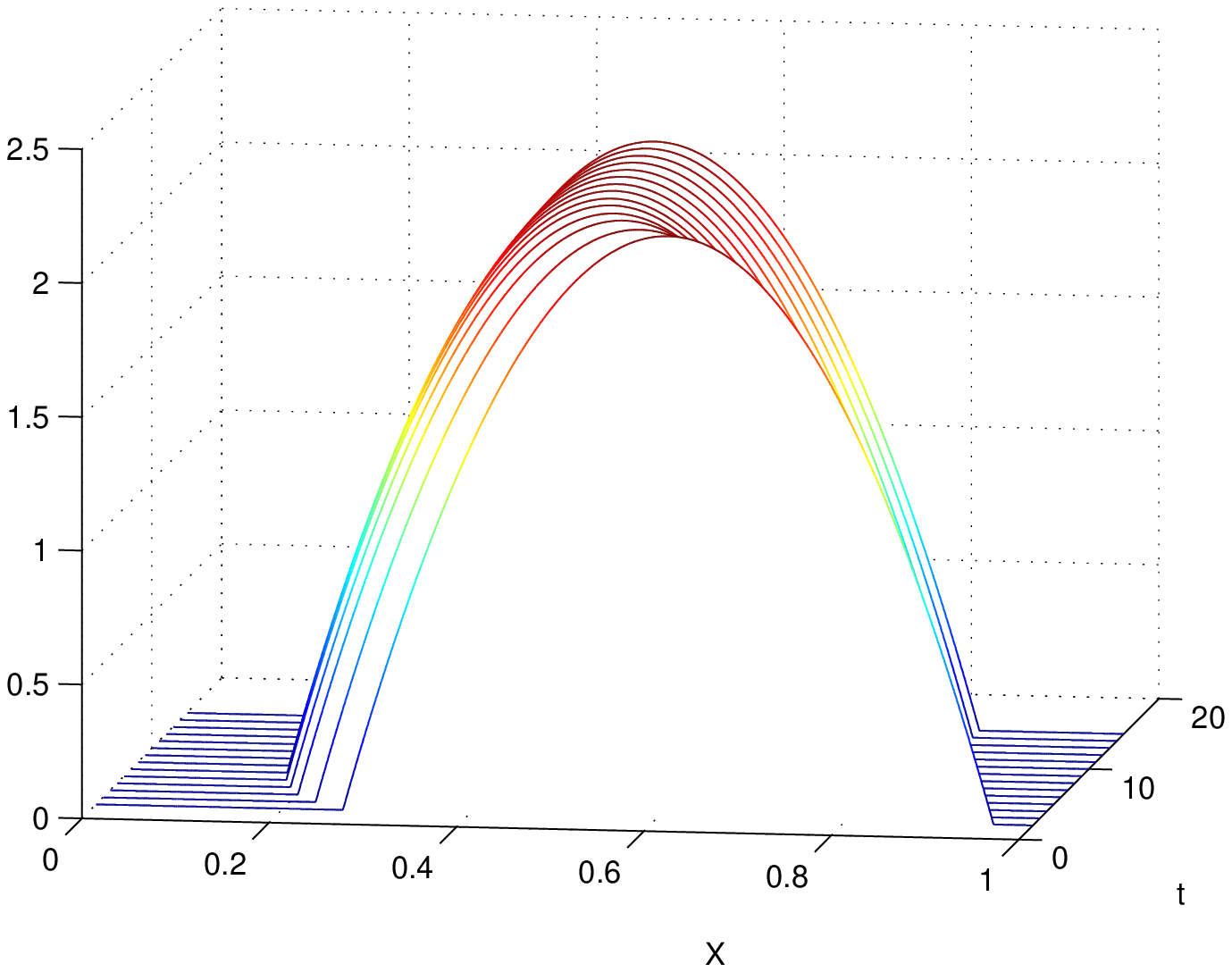,width=6.8cm}
 \end{minipage}\hfill \begin{minipage}[t]{.5\linewidth}
\centering\epsfig{figure=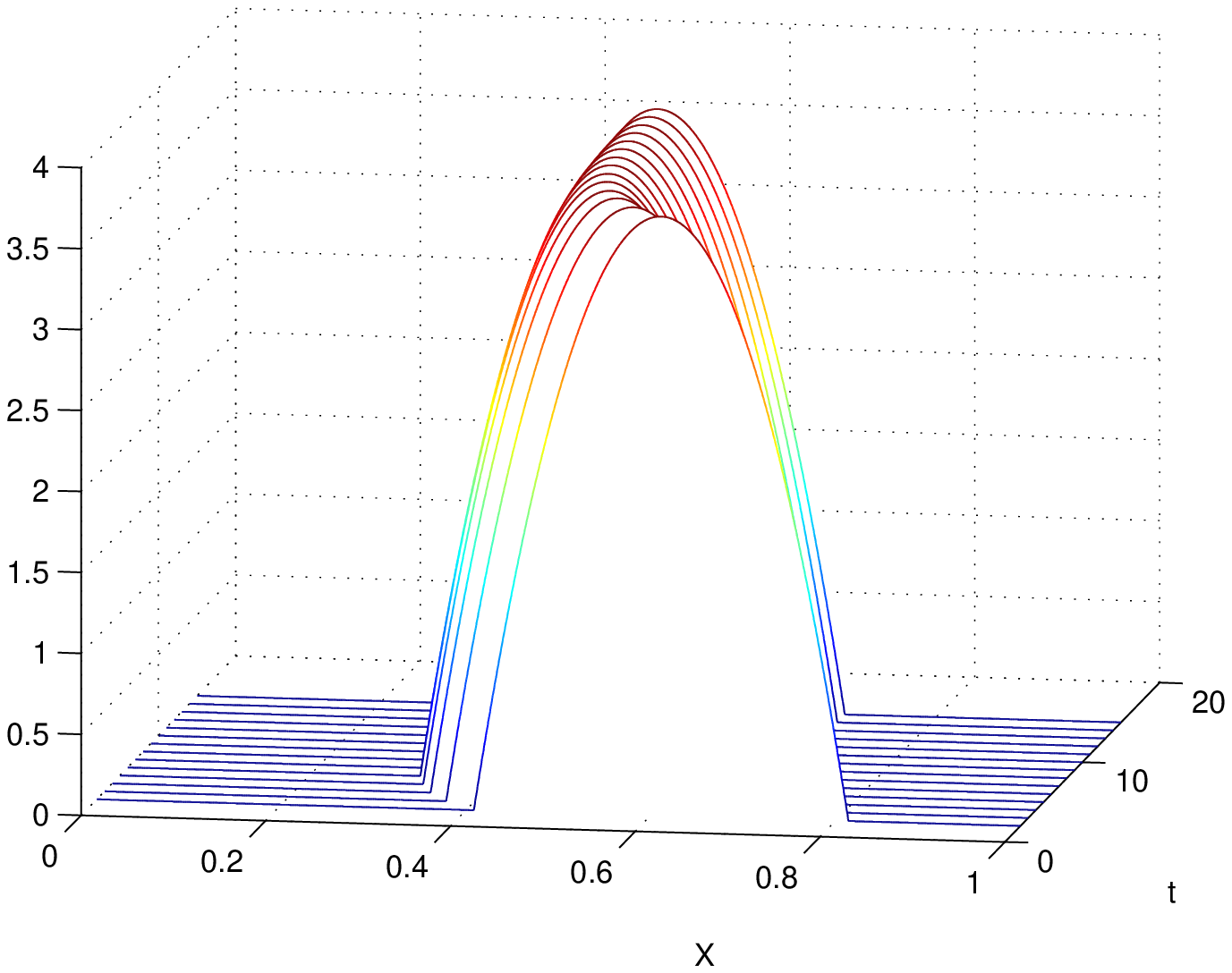,width=6.8cm}
 \end{minipage}
\caption{\small Linear case: convergence for the iterates of algorithm 2 above,   in the case of a uniform $\mu$, $c(x,y)=\vert x-y\vert^{4}/4$ and  interaction given by $\phi(x,y)=3 \vert 3x-2y -1/2\vert^{2}$. On the right, similar example with a non-symmetric interaction term given by $\phi(x,y)=10 \vert 3x-2y-1/2\vert^2$.\label{fig:done}} 
  \end{figure}

{\bf{Acknowledgements.}} The authors gratefully acknowledge the support of INRIA and the ANR through the Projects ISOTACE (ANR-12-MONU-0013) and OPTIFORM (ANR-12-BS01-0007).
\bibliographystyle{siam}
\bibliography{biblio}
\end{document}